\newcommand{\bb}[1]{\mathbb{#1}}
\newcommand{\cl}[1]{\mathcal{#1}}
\newcommand{\vp}{\varepsilon}
\newcommand{\sst}{\scriptstyle}
\newcommand{\ms}{\medskip}
\theoremstyle{plain}
\newtheorem{thm}{Theorem}[section]
\newtheorem{lem}[thm]{Lemma}
\newtheorem{cor}[thm]{Corollary}
\theoremstyle{remark}
\newtheorem{rem}[thm]{Remark}
\theoremstyle{definition}
\newtheorem{stp}{Step}
\numberwithin{equation}{section}
\def\R{\bb R}
\def\C{\bb C}
\def\P{\bb P}
\def\N{\bb N}
\def\P{\bb P}
\def\ie{{\it i.e.\ }}
\def\phi{\varphi}
\begin{document}

\title{The non-commutative Khintchine inequalities for $0<p<1$}

\author{by\\
 Gilles Pisier and \'Eric Ricard\footnote{Partially supported by  
    ANR-2011-BS01-008-01.}\\
Texas A\&M University 
and  
 Universit\'e  de Caen Basse-Normandie}
 \date{ }

\maketitle

\begin{abstract} 
We give a proof of the   Khintchine inequalities
  in  non-commutative $L_p$-spaces  for all $0< p<1$. 
These new inequalities are  valid for the Rademacher functions or Gaussian random variables, but also for more general   sequences, e.g. for the   analogues of such random variables in free  probability. We also   prove
  a factorization for operators
  from a Hilbert space to a non commutative $L_p$-space, which is new 
  for $0<p<1$. We end by showing that Mazur maps are H\"older on semifinite von Neumann algebras. \end{abstract}

2000 MSC 46L51, 46L07, 47L25, 47L20
\vfill\eject
The Khintchine inequalities for   non-commutative $L_p$-spaces were
 first proved by Lust-Piquard in \cite{LP1}.
 They play an   important r\^ole in the recent developments
in non-commutative Functional Analysis, and in particular in Operator Space Theory, see \cite{P2}. Just like their commutative counterpart for ordinary $L_p$-spaces, they are    a crucial tool to understand
the behavior of unconditionally convergent series of random
variables, or random vectors,  
in non-commutative $L_p$ (\cite{PX}). The commutative version is 
closely   related to Grothendieck's Theorem (see \cite{Ma,Ma2}).
Moreover, in the non-commutative case, 
 Random Matrix Theory and
Free Probability provide further ground for applications 
of the non-commutative Khintchine inequalities.
For instance, they imply the  remarkable fact that the Rademacher functions
(i.e. i.i.d. $\pm 1$-valued independent random  variables) satisfy the same inequalities
as the freely independent ones in non-commutative $L_p$ for $p<\infty$.
See \cite{DR} for a recent direct simple proof of the free version
of these inequalities, which   extend to $p=\infty$. 

 In the most classical setting, the  non-commutative Khintchine inequalities  deal with Rademacher series
 of the form
 $$S=\sum\nolimits_k r_k(t) x_k$$
 where $(r_k)$ are the Rademacher functions on the Lebesgue interval
 (or any independent symmetric sequence of random choices of signs)
 where the coefficients $x_k$ are in the Schatten $q$-class $S_q$
 or in a non-commutative $L_q$-space associated to a semifinite trace $\tau$.
 Let us denote simply by $\|.\|_q$ the norm (or quasi-norm) in the latter  Banach
 (or quasi-Banach) space, that we will denote by $X$. By Kahane's well known results, this series converges
 almost surely in norm iff it converges in $L_q(dt;X)$
 (and in fact in $L_p(dt;X)$ for any $0<p<\infty$, but for obvious reasons we prefer to work in the present context with $p=q$). Thus to characterize
 the almost surely   norm-convergent series such as $S$, it suffices
 to produce a two sided equivalent of $\|S\|_{L_q(dt;X)}$, and this is 
 precisely what   the non-commutative Khintchine inequalities provide:\\
 For any $0<q<\infty$ there are positive constants
 $\alpha_q,\beta_q$
such that for any finite set $(x_1,\ldots, x_n)$ in $X=S_q$ (or $X=L_q(\tau)$)
we have
$$ \frac 1 {\beta_q} |||(x_k) |||_q\le  \left(\int \big\|S(t)\big\|^q_q dt\right)^{1/q}\le \alpha_q |||(x_k) |||_q$$
where 
$|||(x_k) |||_q$ is defined as follows:\\
 {  If } $2\le q<\infty$
 $$|||(x_k)|||_q = \max\left\{\left\|\left(\sum x^*_kx_k\right)^{\frac 12} \right\|_q , 
 \left\|\left(\sum x_k x^*_k\right)^{\frac 12} \right\|_q\right\}$$
 {and if }  $0< q\le 2$:
\begin{equation}\label{eq0.2}
|||x|||_q \overset{\sst\text{def}}{=} \inf_{x_k=a_k+b_k} \left\{\left\|\left(\sum a^*_ka_k\right)^{\frac 12} \right\|_q + \left\|\left(\sum b_kb^*_k\right)^{\frac 12}\right\|_q\right\}.
\end{equation}
Note that $\beta_q=1$ if $q\ge 2$, while $\alpha_q=1$ if $q\le 2$ and the corresponding one sided bounds are easy. The difficulty is to verify
the other side.

 The case $1<q<\infty$ is due to Lust-Piquard   \cite{LP1}.
 The case $q=1$ was proved   (in two ways) in \cite{LPP}, together with a new proof of $1<q<\infty$. This also implied the fact (independently observed by Junge) that $\alpha_q=O(\sqrt{q})$ when $q\to \infty$, which yielded an interesting
 subGaussian estimate. Later on,  Buchholz proved in \cite{Buc} a sharp
 version valid when  $q>2$ is any even integer, the best $\alpha_q$ happens to be the same as in the commutative (or scalar) case.

The case  $q<2$  of the Khintchine inequalities has a more delicate formulation, 
 but this case can be handled easily when $1<q<2$ using a suitable duality argument. The case $q=1$ is closely related to the ``little non-commutative
 Grothendieck inequality" in the sense of \cite{P7}
 (first proved in \cite{P1}): actually, one of the proofs given for that case in \cite{LPP} shows that it is essentially ``equivalent" to it.
 More recently, Haagerup and Musat (\cite{HM1}) gave a new proof
 that yields the best constant (equal to 2) for $q=1$ for
 the complex analogue (namely Steinhaus random variables)
 of the Rademacher functions.

 In \cite{P5}
 the first named author proved by an extrapolation argument
 that the validity of this kind of inequalities for some $1<q<2$ implies
 their validity for all $1\le p<q$, but the  case $q<1$ remained open.
However, very recently
   the second named author noticed that
the method proposed in \cite{P5} actually works
in this case too. The latter method reduced the problem
to a certain form of H\"older type inequality which could not be verified
because the  required  ingredients   
 (duality and triangular projection) became seemingly unavailable for  
$0<q<1$. In \cite{P5} a certain very weak form of the required 
H\"older type estimate
was identified as sufficient to complete the case $q<1$. It is this form
that the second named author was able to establish
by an a priori ultraproduct argument (see Remark \ref{ultra}). 
Although his argument failed to produce
explicitly a quantitative estimate, it showed that some estimate does exist.
The goal of this paper is to produce   an explicit estimate,
and a reasonably self-contained proof of the case $q<1$.
 In fact, it turns out that a certain
  version of H\"older's inequality (perhaps of independent interest) does hold, thus we can produce an explicit
  estimate, similar to the case $q\ge 1$ but
with unexpected exponents. This inequality, namely
\eqref{riceq2.0b} below, may prove useful in the theory of means developed
in \cite{HK,HK2}.
 
In the rest of the paper we will consider only the case $0<q\le 2$. 
In that case, 
 our      inequalities reduce to this : There is $\beta_q$
 such that for any finite sequence
  $(x_k)$    in an arbitrary non-commutative $L_q$-space, we have
  \begin{equation}\label{eq0.1}
|||x|||_q \le \beta_q\left(\int\left\|\sum r_k(t) x_k\right\|^q_q dt\right)^{\frac 1q}
\end{equation}
where $|||x|||_q$ is as in \eqref{eq0.2}.
 
In this paper,  as in \cite{P5}, we will show that the validity of \eqref{eq0.1} for some 
fixed $q$ with $1<q<2$ implies its validity (with another constant) for all 
value of $q$ in $(0,q)$ (and in particular for all $q$ in $(0,1)$).  
For that deduction the only assumption needed on $(r_k)$ is its orthonormality in $L_2([0,1])$. Thus our approach yields \eqref{eq0.1} also for more general sequences than the Rademacher functions. For instance, we may apply it to
free Haar unitaries in the sense of \cite{VDN} or to  the ``$Z(2)$-sequences'' considered in \cite{Har}.  

In \S \ref{sec4} we prove an extension to the case $0<q<1$ of
the ``little Grothendieck inequality", i.e. a (Maurey type)  factorization for bounded linear maps from a Hilbert space to a non-commutative $L_p$-space.

In \S \ref{sec5} we extend to the case $0<p<1$ some of the results of \cite{R}
giving H\"older exponents for the Mazur map
 $M_{p,q}:L_p(\tau)\to L_q(\tau)$  given by $$M_{p,q}(f)=f|f|^{\frac
   {p-q}q},$$
   relative to a semifinite von Neumann algebra $(M,\tau)$.

For convenience, we recall   an elementary fact: if 
  $X$ is an $L_p$-space (commutative or not)
  and if  $0<p\le 1$, the
quasi-norm $\|\ \|$ of $X$ satisfies the ``$p$-triangle inequality":
 \begin{equation}\label{tri2}\forall x,y\in X\quad \|x+y\|^p\le \|x\|^p+\|y\|^p.\end{equation}
Actually, it will be convenient to invoke 
 a  consequence of the triangle inequality, valid,
 this time, for all $0<p\le \infty$:
 \begin{equation}\label{tri}
 \forall x,y\in X\quad \|x+y\|\le \chi_p (\|x\|+\|y\|)\le 2\chi_p \max\{\|x\|, \|y\|\},
 \end{equation}
 where
 $$\chi_p=\max\{2^{\frac{1}{p}-1},1\}.$$

\section{The case $\pmb{1\le q<2}$ from \cite{P5}}\label{sec1}

In this section, we   review
(and partly reproduce)  the previous attempt from \cite{P5} to explain the
contribution of the present paper.\\
 Here, $L_2([0,1])$   can be replaced by any non-commutative $L_2$-space $L_2(\varphi)$ associated to a semifinite generalized (i.e.\ ``non-commutative'') measure space,
and $(r_k)$ is then replaced by an
orthonormal sequence $(\xi_k)$ in $L_2(\varphi)$. Then
     the right-hand side of \eqref{eq0.1} is replaced by $$\|\sum \xi_k\otimes x_k\|_{L_q(\varphi\otimes\tau)}.$$
More precisely, by a (semifinite) generalized measure space $(N,\varphi)$ we mean a von~Neumann algebra $N$ equipped with a faithful, normal, semifinite trace $\varphi$.  Without loss of generality,
we may always reduce consideration to the $\sigma$-finite case.
Throughout this paper, we will use freely the basics of non-commutative integration as described in \cite{Ne} or \cite[Chap. IX]{Tak2}.

Let us fix another generalized measure space $(M,\tau)$. The inequality we are interested in now takes the following form:
\[
(K_q)\quad \left\{
\begin{array}{l}
\exists \beta_q \text{ such that for any finite sequence}\\
x = (x_k) \text{ in } L_q(\tau) \text{ we have}\\
|||x|||_q \le \beta_q \left\|\sum \xi_k\otimes x_k\right\|_{L_q(\varphi\otimes\tau)}\\
\text{where } |||\cdot|||_q \text{ is defined as in \eqref{eq0.2}.}
\end{array}\right.
\]

In the Rademacher case, i.e. when $(\xi_k)=(r_k) $,   
we refer to these as
the non-commutative Khintchine inequalities.

We can now state   the main result of \cite{P5} for the case $q\ge 1$.

\begin{thm}\label{thm1.1}\cite{P5}
Let $1<q<2$. Recall that $(\xi_k)$ is assumed orthonormal in $L_2(\varphi)$. \\ Then $(K_q)\Rightarrow (K_p)$ for all $1\le p<q$.
\end{thm}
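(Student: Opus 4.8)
The plan is to derive $(K_p)$ from $(K_q)$ by a ``change of density'' argument resting on a H\"older-type factorization. Fix $r$ by $\tfrac1p=\tfrac1q+\tfrac1r$; since $p\ge 1$ we have $2r>2$, so every $L$-space below is a Banach space. Write $E\colon L_1(\varphi\otimes\tau)\to L_1(\tau)$ for the trace-preserving conditional expectation that integrates out $\varphi$, and write $\|\cdot\|_s$ for the $L_s$-quasinorm (the algebra being clear from context). By homogeneity it suffices to show that $\|S\|_p=1$, where $S:=\sum_k\xi_k\otimes x_k$, implies $|||x|||_p\le\beta_p$ for a suitable constant. The statement I would isolate — this is the ``H\"older type inequality'' mentioned in the introduction — is the following, valid for every $S\in L_p(\varphi\otimes\tau)$: with the \emph{explicit} densities $a:=\big(E(|S^*|^p)\big)^{1/(2r)}$ and $b:=\big(E(|S|^p)\big)^{1/(2r)}$, which are positive elements of $L_{2r}(\tau)$ with $\|a\|_{2r}\|b\|_{2r}=\|S\|_p^{p/r}$, the element $T:=(1\otimes a^{-1})\,S\,(1\otimes b^{-1})$ (the inverses taken on the supports of $a,b$, so that $S=(1\otimes a)\,T\,(1\otimes b)$ after the obvious support projections) lies in $L_q(\varphi\otimes\tau)$ and obeys
$$\|T\|_q\ \le\ C_{p,q}\,\|S\|_p^{\,1-p/r}.$$

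Granting this, $(K_p)$ follows in a few lines. Apply the factorization to the normalized $S$. Since $a$ and $b$ are affiliated to the $\tau$-side only, multiplication by $1\otimes a^{-1}$ and $1\otimes b^{-1}$ preserves the ``first chaos'' span, so that $T=(1\otimes a^{-1})S(1\otimes b^{-1})=\sum_k\xi_k\otimes y_k$ with $y_k:=a^{-1}x_kb^{-1}\in L_q(\tau)$ and $x_k=a\,y_k\,b$ (a routine $\varepsilon$-regularization of $a,b$ disposes of supports). Hence $\big\|\sum_k\xi_k\otimes y_k\big\|_q=\|T\|_q\le C_{p,q}$, and $(K_q)$ yields $|||y|||_q\le\beta_q\big\|\sum_k\xi_k\otimes y_k\big\|_q\le\beta_q C_{p,q}$. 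It remains to invoke the easy one-sided H\"older inequality for $|||\cdot|||$, namely
$$|||\,(a\,z_k\,b)_k\,|||_p\ \le\ \|a\|_{L_{2r}(\tau)}\;|||\,(z_k)_k\,|||_q\;\|b\|_{L_{2r}(\tau)},$$
which follows at once from \eqref{eq0.2} by distributing the weights over a near-optimal splitting $z_k=z_k'+z_k''$ and applying H\"older's inequality to the resulting columns and rows. Taking $z=y$ gives $|||x|||_p=|||\,(a\,y_k\,b)_k\,|||_p\le\|a\|_{2r}\|b\|_{2r}\,|||y|||_q=\|S\|_p^{p/r}\,|||y|||_q\le\beta_q C_{p,q}$, so $(K_p)$ holds with $\beta_p:=C_{p,q}\,\beta_q$.

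The one substantial point, and the only place where $1<q<2$ is used, is the H\"older factorization above, with an explicit $C_{p,q}$. I would first symmetrize: the standard $2\times 2$-matrix trick (replace $S$ by the self-adjoint operator with $S$ and $S^*$ as its off-diagonal corners) together with the identity $R=|R|^{1/2}(\mathrm{sgn}\,R)|R|^{1/2}$ reduces the general case to the positive case, so that it is enough to prove: for every positive $P\in L_p(\varphi\otimes\tau)$, with $D:=\big(E(P^p)\big)^{1/(2r)}\in L_{2r}(\tau)$ (whence $\|D\|_{2r}=\|P\|_p^{p/(2r)}$), one has $\|(1\otimes D^{-1})\,P\,(1\otimes D^{-1})\|_q\le C_{p,q}\,\|P\|_p^{\,1-p/r}$. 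Establishing this for $1<q<2$ is exactly where duality between $L_q$ and $L_{q'}$ and the boundedness of the triangular (Riesz-type) projection on $L_s(\varphi\otimes\tau)$, $1<s<\infty$, come into play, the projection being what tames the off-diagonal terms produced when the $\tau$-side density $1\otimes D$ is commuted against $P$. This is precisely the point that degenerates for small exponents: the triangular projection is unbounded on $L_1$ and there is no duality below $1$, which is why $q=1$ has to be routed instead through the little non-commutative Grothendieck inequality, and why the eventual passage to $q<1$ needs only a drastically weakened, a priori non-quantitative, substitute for this H\"older inequality — the subject of the rest of the paper. I expect this last estimate to be the whole of the difficulty; the reduction and the two applications of H\"older surrounding it are bookkeeping.
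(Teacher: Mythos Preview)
Your outline is structurally different from the paper's. The paper (following \cite{P5}) does not attempt a single two--sided factorization $S=(1\otimes a)\,T\,(1\otimes b)$ with explicit conditional--expectation densities; instead it runs a three--step extrapolation built on the \emph{Jordan} map $J(f^{\alpha})y=\tfrac12(f^{\alpha}y+y f^{\alpha})$ and an infimum over $f\in\mathcal D$ (the quantities $C_q(x)$). Step~1 gives $|||x|||_p\le C'\,C_q(x)$ from $(K_q)$, Step~2 gives $C_2(x)\le C''\,|||x|||_p$, and the hard Step~3 is the interpolation $C_q(x)\le C'''\,C_p(x)^{1-\theta}C_2(x)^{\theta}$. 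The reason for the additive form $f^{\alpha}y+y f^{\alpha}$ (rather than your multiplicative $a\,y\,b$) is precisely that it makes the ``easy'' passages genuinely easy: with $x_k=\tfrac12 f^{\alpha}y_k+\tfrac12 y_k f^{\alpha}$ one puts the first summand in the row part and the second in the column part, and H\"older factors the \emph{outer} weight cleanly.

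Your proposal has a concrete gap at what you call the ``easy one--sided H\"older'' step
\[
|||(a\,z_k\,b)|||_p \ \le\ \|a\|_{2r}\,|||(z_k)|||_q\,\|b\|_{2r}.
\]
The argument you sketch (carry a near--optimal splitting $z_k=z_k'+z_k''$ across and apply H\"older) does not work: for the column piece one gets
\[
\sum_k (a z_k' b)^*(a z_k' b)=b^*\Big(\sum_k z_k'^{\,*}\,a^2\,z_k'\Big)b,
\]
and the factor $a^2$ is trapped \emph{between} $z_k'^{\,*}$ and $z_k'$; it cannot be extracted by H\"older. For a concrete failure of that column estimate take $M=M_n$, $a=e_{11}$, $b=I$, $z_k'=e_{1k}$: then $\mathrm{Col}_p(a z_k' b)=n^{1/p}$, while $\|a\|_{2r}\,\mathrm{Col}_q(z_k')\,\|b\|_{2r}=n^{1/q+1/(2r)}=n^{1/p-1/(2r)}$, so the induced splitting blows up by $n^{1/(2r)}$. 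The global inequality may still be true (in the example the row splitting rescues it), but it is not the one--line consequence of \eqref{eq0.2} you claim; at best it needs an argument on the level of Step~2 (the equivalence $|||\cdot|||_p\approx C_2$) or an operator--space/module argument, and at worst it forces you back to the Jordan--map formulation. In addition, the ``hard'' factorization you isolate --- $\|(1\otimes D^{-1})P(1\otimes D^{-1})\|_q\le C\|P\|_p^{1-p/r}$ with $D=(E(P^p))^{1/(2r)}$ --- is \emph{not} the H\"older--type estimate proved in \cite{P5}; that paper proves instead \eqref{sup5bis} for the Jordan map, which is what feeds into the $C_q$--extrapolation. So as written your sketch trades one unproved inequality for two.
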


Here is a sketch of the argument in \cite{P5}. We denote
\[
 S = \sum \xi_k\otimes x_k.
\]
Let ${\cl D}$ be the collection of all ``densities,'' i.e.\ all $f$ in $L_1(\tau)_+$ with $\tau(f) = 1$. Fix $p$ with $0<p\le q$. Then we denote for $x=(x_k)$ 
\begin{equation}\label{ric2}
C_q(x) = \inf\left\{\left\|\sum \xi_k\otimes y_k\right\|_q\right\}
\end{equation}
where $\|\cdot\|_q$ is the norm in $L_q(\varphi\otimes\tau)$ and the infimum runs over all sequences $y = (y_k)$ in $L_q(\tau)$ for which there is $f$ in ${\cl D}$ such that
\[
 x_k = (f^{\frac1p-\frac1q}y_k + y_kf^{\frac1p-\frac1q})/2.
\]
Note that $C_p(x) = \|S\|_p$.

The proof of Theorem \ref{thm1.1} is based on a variant of ``Maurey's extrapolation principle'' (see \cite{Ma}) This combines three steps:\ (here $C',C'',C''',\ldots$ are constants independent of $x=(x_k)$ and we wish to emphasize that here $p$ remains fixed while the index $q$ in $C_q(x)$ is such that $p<q\le2$).\ms

\begin{stp}\label{stp1}
Assuming $(K_q)$ we have
\[
 |||x|||_p \le C'C_q(x).
\]
\end{stp}

\begin{stp}\label{stp2}
\[
C_2(x) \le C''|||x|||_p.
\]
Actually   the converse inequality also holds (up to a constant), see \cite{P5}.\ms 
\end{stp}

\begin{stp}\label{stp3}  
\[
 C_q(x) \le C'''C_p(x)^{1-\theta} C_2(x)^{\theta},
\]
where $\theta$ is defined by $\frac{1}{q}=\frac{1-\theta}{p} +\frac{\theta}{2}$. 
The three steps put all together yield
\begin{align*}
 |||x|||_p &\le C'C''' C_p(x)^{1-\theta} (C''|||x|||_p)^{\theta}\\
\intertext{and hence}
|||x|||_p &\le C^{''''}C_p(x) = C^{''''}\|S\|_p.
\end{align*}
\end{stp}
 
 Only the proof of Step 3 required $p\ge 1$ in \cite{P5}.
 Note that actually it suffices  that,
 for some $0<\theta'<1$, we have
\begin{equation}\label{sup5}
 C_q(x) \le C'''C_p(x)^{1-\theta'} C_2(x)^{\theta'},
\end{equation}
and we will show that this essentially holds in \S \ref{sec2}, but with a 
rather surprising exponent given  by
$$1-\theta'= (1-\theta)\frac{R }{2},$$
where $R$ is any number such that $0<R<p$.
As will be explained below in Remark \ref{r-substi}, the bound in
\eqref{sup5} can be deduced from the following  variant of H\"older's inequality,
that  will be proved in \S 2: There is a
 constant $c$ such that  
\begin{equation}\label{sup5bis}
\forall x\in L_2(\tau)\ \forall f\in \cl D\quad\|f^{\alpha(1-\theta)} x + xf^{\alpha(1-\theta)}\|_q\le c  \|f^\alpha x + xf^\alpha\|_p^{1-\theta'} \|x\|_2^{\theta'}.\end{equation}
When $p\ge 1$ this holds (see \cite{P5}) with $\theta'=\theta$.
In the commutative case (or if there is only one term), when $\theta'=\theta$ this reduces to H\"older's inequality
for $L_p$-norms (just write $f^{\alpha(1-\theta)} x=(f^{\alpha}x)^{1-\theta} x^{\theta}$ and recall $\frac 1q=\frac {1-\theta}{p}+\frac{\theta}2$), so this holds (with $c=2^\theta$) for $0<p<\infty$.\\
When $p>1$ the (complete) boundedness of the triangular projection on $S_p$ implies
\begin{equation}\label{sup5ter}\max\{\|f^\alpha x\|_p,  \|xf^\alpha\|_p\} \lesssim \|f^\alpha x + xf^\alpha\|_p,\end{equation} from which \eqref{sup5bis} with $\theta'=\theta$ is immediate (see
\cite{JuP} or  \cite[1.9 (iii)]{P5}).
However this fails for $p\le 1$, because,  
 by a well known argument, such an estimate would imply conversely the boundedness of the triangular projection, which fails for $p\le 1$.  \\
When $p<1$ we do not know whether \eqref{sup5}  or \eqref{sup5bis} holds with  $\theta'=\theta$.

   \begin{rem}\label{rem1.3}
In Theorem \ref{thm1.1}, the assumption that $(\xi_k)$ is orthonormal in $L_2(\varphi)$ 
  can be replaced by the following one:\ for any finite sequence $y = (y_k)$ in $L_2(M,\tau)$ we have
\begin{equation}\label{eq1.5}
 \left\|\sum \xi_k\otimes y_k\right\|_{L_2(\varphi\otimes\tau)} \le \left(\sum \|y_k\|^2_2\right)^{\frac 12}.
\end{equation}
 \end{rem}

We will need the following fact. Results of this kind originate in Arazy and Friedman's memoir \cite{AF} and can also be found in
Junge and Parcet's paper \cite{JuP} (see also \cite{HK2} for related inequalities).

\begin{lem}[\cite{RX1}]\label{lem1.6} Let $Q_j$ $(j=1,\ldots,n)$ be mutually orthogonal projections
in $M$  and let $\lambda_j$ $(j=1,\ldots,n)$ be non-negative numbers.
There is a constant $C$ so that for any $1\leq q\leq \infty$ and $\theta\in[0,1]$, for any $x$ in $L_q(\tau)$
   \[
\frac 1 C\|x\|_{L_q(\tau)}\leq  \left\|\sum^{n}_{i,j=1} \frac{\lambda_i^\theta+\lambda_j^\theta}{(\lambda_i+\lambda_j)^\theta}Q_i xQ_j\right\|_{L_q(\tau)} \le C\|x\|_{L_q(\tau)} \]
\end{lem}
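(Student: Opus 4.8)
The plan is to recognize the multiplier $m(i,j) = \dfrac{\lambda_i^\theta+\lambda_j^\theta}{(\lambda_i+\lambda_j)^\theta}$ as a Schur-type multiplier acting on the block decomposition $x = \sum_{i,j} Q_i x Q_j$, and to bound both it and its inverse $m(i,j)^{-1}$ by absolutely convergent integral representations of the form $\int u_\lambda(i)\,v_\lambda(j)\,d\mu(\lambda)$ with $u_\lambda, v_\lambda$ uniformly bounded. The point is that a multiplier of the shape $(i,j)\mapsto \sum_s a_s(i) b_s(j)$ (or its continuous analogue) with $\sum_s \|a_s\|_\infty \|b_s\|_\infty \le C$ automatically satisfies $\big\|\sum_{i,j} a(i) Q_i x Q_j b(j)\big\|_q \le C \|x\|_q$ for \emph{every} $1\le q\le\infty$, since each rank-one term $x \mapsto (\sum_i a_s(i)Q_i)\, x\, (\sum_j b_s(j)Q_j)$ is a product of a left and a right multiplication by a contraction (after normalizing), hence contractive on $L_q(\tau)$, and one sums with the $p$-triangle or ordinary triangle inequality. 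So the whole lemma reduces to producing such a representation for $m$ and for $1/m$ with a constant $C$ independent of $n$, $\theta$ and the $\lambda_j$'s.

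First I would handle the upper bound. Write, for $a,b>0$,
\[
\frac{a^\theta + b^\theta}{(a+b)^\theta} = \Big(\frac{a}{a+b}\Big)^\theta + \Big(\frac{b}{a+b}\Big)^\theta = t^\theta + (1-t)^\theta, \qquad t = \frac{a}{a+b}\in(0,1).
\]
Thus $m(i,j) = g(t_{ij})$ where $g(t) = t^\theta + (1-t)^\theta$ is a fixed bounded function on $[0,1]$ with $1 \le g \le 2$. The task is to represent $t\mapsto t^\theta$ (with $t=a/(a+b)$, so $t^\theta = a^\theta (a+b)^{-\theta}$) in separated form. I would use the classical integral formula
\[
\lambda^{-\theta} = c_\theta \int_0^\infty \frac{ds}{(\lambda+s)\, s^{1-\theta}}, \qquad c_\theta = \Big(\int_0^\infty \frac{du}{(1+u)u^{1-\theta}}\Big)^{-1},
\]
applied to $\lambda = a+b$, which gives
\[
a^\theta(a+b)^{-\theta} = c_\theta \int_0^\infty \frac{a^\theta}{(a+b+s)\, s^{1-\theta}}\, ds.
\]
The kernel $\dfrac{a^\theta}{a+b+s}$ still mixes $a$ and $b$, so I would peel off one more layer with the resolvent identity $\dfrac{1}{a+b+s} = \int_0^\infty e^{-r(a+s)} e^{-rb}\, dr$, producing a fully separated representation $a^\theta(a+b)^{-\theta} = \iint e^{-r(a+s)} a^\theta \cdot e^{-rb}\, d\nu(r,s)$ with $d\nu$ a positive measure; since $\sup_{a\ge 0} e^{-ra}a^\theta = (\theta/(er))^\theta$ is integrable against the remaining weights, the total mass is finite and, crucially, bounded uniformly in $\theta\in[0,1]$ (the constant $c_\theta$ and the resulting integrals stay bounded as $\theta\to 0,1$; I would check the endpoint behavior explicitly, or note $g$ and hence $m$ extend continuously). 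Applying this with $\lambda_i$ in place of $a$ and $\lambda_j$ in place of $b$, and symmetrizing, yields $m(i,j) = \sum$ (integral) of $u_\lambda(i) v_\lambda(j)$ with uniformly bounded total variation, giving the upper bound with a universal $C$.

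For the lower bound I need the same kind of representation for $1/m(i,j) = 1/g(t_{ij})$ where $g(t) = t^\theta + (1-t)^\theta$, $t\in[0,1]$. Here the clean approach is: $1/g$ is a smooth (indeed real-analytic on $(0,1)$) function bounded between $1/2$ and $1$, symmetric about $t=1/2$, and one checks it is a \emph{Laplace-transform-type} / positive-definite combination — concretely, I would expand $1/g(t)$ in the Fourier basis after the change of variable $t = (1+\tanh v)/2$ (which sends $t^\theta/(1-t)^\theta = e^{2\theta v}$), turning $g$ into $2\cosh(\theta v)/(\text{something})$ and $1/g$ into a function of $\theta v$ whose Fourier transform in $v$ is easily seen to be an $L^1$ function with norm bounded uniformly in $\theta$; transporting back gives $1/m(i,j) = \int \varphi(\lambda_i)\overline{\varphi(\lambda_j)}\cdots$ in separated form. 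Alternatively — and this may be the cleanest route — I would simply cite that such ``divided-difference'' multipliers are completely bounded on $L_q$ for all $q$, which is exactly the content of the Arazy–Friedman and Junge–Parcet results referenced just before the lemma, and which is why the lemma is attributed to \cite{RX1}; then the only real work is the uniformity in $\theta$ and $n$.

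I expect the main obstacle to be precisely this uniformity: getting the constant $C$ to be independent of $\theta\in[0,1]$ (the endpoints $\theta=0$, where $m\equiv 2$, and $\theta=1$, where $m\equiv 1$, are trivial, but one must control the approach) and independent of the number $n$ of projections and of the actual values $\lambda_j$. The $n$- and $\lambda$-independence is automatic once the multiplier is written as $\int u_\lambda(i)v_\lambda(j)\,d\mu(\lambda)$ with $\|\mu\|$ depending only on $g$ (hence only on $\theta$), since that representation makes sense for any family of scalars. So everything funnels into a single one-variable estimate: bounding the total variation of the measure representing $t^\theta$ (for the upper bound) and representing $1/(t^\theta+(1-t)^\theta)$ (for the lower bound) on $[0,1]$, uniformly in $\theta$. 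That is a concrete special-function computation; the inequality $1\le q\le\infty$ then comes for free by interpolation or directly from the contractivity of left/right multiplications on $L_q(\tau)$, and the $L_\infty$ and $L_1$ endpoint cases serve as the base.
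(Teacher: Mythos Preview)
The paper does not prove this lemma at all; it is stated with a citation to \cite{RX1} (and the paragraph before it points to \cite{AF}, \cite{JuP}, \cite{HK2} for related facts). So your final remark---that the cleanest route is simply to cite the result---is exactly what the paper does.

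Since you go further and sketch an argument, let me comment on that. The overall strategy is right: the map $x\mapsto\sum_{i,j} m(i,j)\,Q_ixQ_j$ is a Schur-type multiplier, and if one can write $m(i,j)=\int u_s(\lambda_i)\,v_s(\lambda_j)\,d\mu(s)$ with $\int \|u_s\|_\infty\|v_s\|_\infty\,d|\mu|(s)\le C$ then the $L_q$-bound (for every $1\le q\le\infty$, uniformly in $n$ and in the $\lambda_j$'s) follows immediately from left/right multiplication by contractions. This is indeed how the result in \cite{RX1} is obtained.

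There is, however, a genuine gap in the specific representation you propose for the upper bound. After your two substitutions one gets
\[
\frac{a^\theta}{(a+b)^\theta}
= c_\theta \int_0^\infty\!\!\int_0^\infty \bigl(a^\theta e^{-ra}\bigr)\,\bigl(e^{-rb}\bigr)\,e^{-rs}\,s^{-\theta}\,dr\,ds,
\]
and taking suprema in $a$ and $b$ leaves $c_\theta(\theta/e)^\theta\int_0^\infty\!\int_0^\infty r^{-\theta}e^{-rs}s^{-\theta}\,dr\,ds$; the $s$-integral yields $\Gamma(1-\theta)r^{\theta-1}$ and one is left with $\int_0^\infty r^{-1}\,dr=\infty$. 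The divergence is not an accident: $m(a,b)$ is homogeneous of degree~$0$, so any \emph{absolutely integrable} separation built from functions like $a^\theta e^{-ra}$ (which carry a scale) will fail. The approach that does work is the one you describe only for the lower bound: pass to the variable $y=\log(a/b)$, so that $m(a,b)=h(y)$ with $h(y)=2^{1-\theta}\cosh(\theta y/2)/\cosh(y/2)^\theta$, observe that $h-1$ decays exponentially and is smooth, hence $\widehat{h-1}\in L^1(\RR)$, and obtain
\[
m(\lambda_i,\lambda_j)=1+\int_{\RR}\lambda_i^{it}\,\lambda_j^{-it}\,\widehat{h-1}(t)\,dt,
\]
a separated representation with unimodular factors and total mass $1+\|\widehat{h-1}\|_1$. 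The same works for $1/h$. The uniformity in $\theta\in[0,1]$ then reduces, as you say, to a one-variable estimate on $\|\widehat{h_\theta-1}\|_1$ and $\|\widehat{1/h_\theta-1}\|_1$, which is where the actual work lies; the endpoints $\theta=0,1$ are trivial and continuity handles the rest.
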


\begin{rem}
If $0<p\le 2$, the converse inequality to $(K_p)$ is valid assuming that
$\varphi(1)=1$ and $\xi_k\in L_2(N,\varphi)$ is orthonormal or satisfies \eqref{eq1.5}.
Indeed, for any $t\ge 0$ in $N\otimes M$ since $\frac p2\le 1$ and $\varphi(1)=1$, by the operator concavity of
$t\mapsto t^{\frac p2}$ (see \cite[p. 115-120]{Bh}), 
 we have 
$
 \|t\|_{\frac p2} \le \|{\bb E}^M(t)\|_{\frac p2}
$
and hence, if $S = \sum \xi_k\otimes x_k$, we have
$$
 \|S\|_p = \|S^*S\|^{\frac 12}_{\frac p2}  \le \|{\bb E}^M(S^*S)\|^{\frac 12}_{\frac p2}\\
 \le \left\|\left(\sum x^*_kx_k\right)^{\frac 12}\right\|_p,
$$
and similarly
\[
 \|S\|_p \le \left\|\left(\sum x_kx^*_k\right)^{\frac 12}\right\|_p.
\]
From this we easily deduce\[
 \|S\|_p \le \chi_p |||x|||_p.
\]
 where $\chi_p$ is as in \eqref{tri}. 
\end{rem}

\begin{rem}\label{r-substi} To extend Theorem \ref{thm1.1} to the case $0<p<1$ the difficulty lies in Step \ref{stp3}, or in proving
a certain form of H\"older inequality. As
mentioned in \cite{P5}, actually a much \emph{weaker} estimate allows to conclude:\\
It suffices to show that there is a function $\vp\mapsto\delta(\vp)$ tending to zero with $\vp>0$ such that when $f\in {\cl D}$ we have $(\alpha=\frac1p - \frac12=\frac1r)$ $(1<q<2)$:
\begin{equation}\label{substi}    [\|x\|_2 \le 1, \|f^\alpha x + xf^\alpha\|_p\le \vp]\Rightarrow
 \|f^{\alpha(1-\theta)} x + xf^{\alpha(1-\theta)}\|_q \le \delta(\vp).
\end{equation}
When $1<p<\infty$, we have by \cite{P5}, $\delta(\vp)\leq C_p \vp^{1-\theta}$. This estimate can already be found in \cite[Ch. 3]{AF}  for Schatten classes.
We will show that \eqref{substi}  can be substituted to Step \ref{stp3}. 

Indeed,
setting $w(\vp)=2\delta(\vp) \vp^{-1}$, by homogeneity this implies
$$ \|f^{\alpha(1-\theta)} x + xf^{\alpha(1-\theta)}\|_q \le \delta(\vp)
\max\{ \|x\|_2  , \vp^{-1}\|f^\alpha x + xf^\alpha\|_p\}\le \delta(\vp)\|x\|_2  +\frac{w(\vp)}2\|f^\alpha x + xf^\alpha\|_p. $$

Fix $\vp'>0$. Let $y_k$ be such that $x_k = (f^{\alpha}y_k+y_kf^{\alpha})/2$ with
\[
 \left\|\sum \xi_k\otimes y_k\right\|_2 < C_2(x)(1+\vp').
\]

We will again denote
$S=\sum \xi_k\otimes x_k$, and we set $Y=\sum \xi_k\otimes y_k$.

Let us assume that $(M,\tau)$ is $M_n$ equipped with usual trace (the argument works  assuming merely that   $f$
has finite spectrum). We will use the orthonormal basis for which $f$ is diagonal with coefficients denoted by $(f_i)$. We have then
\[
 (y_k)_{ij} = 2(f^{\alpha}_i + f^{\alpha}_j)^{-1}(x_k)_{ij}.
\]
Then the above inequality, with $Y$ in place of $x$, yields
 \begin{equation}\label{ric1}
\|f^{\alpha(1-\theta)} Y + Y f^{\alpha(1-\theta)}\|_q \le \delta(\vp)\|Y\|_2  +w(\vp)\|S\|_p. \end{equation}
We will now compare  the elements $T$ and $Z$ defined by
$$T=f^{\alpha(1-\theta)} Y + Y f^{\alpha(1-\theta)}=[ f_i^{\alpha(1-\theta)} Y_{ij} + Y_{ij} f_j^{\alpha(1-\theta)}]=2[\frac{f_i^{\alpha(1-\theta)}+f_j^{\alpha(1-\theta)} }
{ f_i^{\alpha}+f_j^{\alpha}} x_{ij} ],$$

$$Z=[\frac{2}{ f_i^{\alpha \theta}+f_j^{\alpha\theta}} S_{ij} ]=
[\frac{f_i^\alpha+f_j^\alpha}{ (f_i^{\alpha \theta}+f_j^{\alpha\theta})(f_i^{\alpha (1-\theta)}+f_j^{\alpha(1-\theta)})} T_{ij} ] .$$
Using Lemma \ref{lem1.6} twice, we find
 \begin{equation}\label{sup1}\|Z\|_q\le C^2 \|T\|_q=C^2 \|f^{\alpha(1-\theta)} Y + Y f^{\alpha(1-\theta)}\|_q.\end{equation}
Now,
since $S=(f^{\alpha\theta} Z +Z f^{\alpha\theta})/2$ and $\alpha\theta=\frac1p-\frac1q$,
by
 \eqref{ric2}  we have $C_q(x)  \le  \|Z\|_q$  and \eqref{sup1} implies
 $$C_q(x)  \le C^2 (\delta(\vp)\|Y\|_2  +w(\vp)\|S\|_p),$$
and since the inf of $\|Y\|_2$ over all factorizations of the form $x_k = (f^{\frac 1r}y_k+y_kf^{\frac 1r})/2$ (or equivalently $S= (f^{\frac 1r}Y+Yf^{\frac 1r})/2$)
is equal to $C_2(x)$
we find
\begin{equation}\label{ric13} C_q(x)\le C^2 (\delta(\vp)C_2(x)  +w(\vp)\|S\|_p)\end{equation}
by Step  1  and 2, for some $c$
$$ |||x|||_p \le c (\delta(\vp)|||x|||_p   +w(\vp)\|S\|_p)$$
 and hence choosing $\vp$ small enough we again conclude
 $$ |||x|||_p \le c'  \|S\|_p .$$
 
 The preceding arguments, up to \eqref{ric13}, work just as well if we merely assume that $f$ has finite spectrum.
 We now use this to complete the proof in the general semifinite case.\\
 Let $(y_k)$ and $f\in \cl D$ be such that
 $ x_k = (f^{\alpha}y_k+y_kf^{\alpha})/2$ 
 and 
 $(\sum \|y_k\|_2)^{\frac 12}<2C_2(x).$ Recalling Step 2, we have
$$(\sum \|y_k\|_2)^{\frac 12} \le 2C'' |||x|||_p.$$
 Fix $\vp'>0$. Let $g$ be a density
  with finite spectrum such that $\|g-f\|_1<\vp'$. We can find such a $g$ 
  by approximating the spectral decomposition of $f$ so that
  the spectral decomposition of
  $g$ commutes with that of $f$.  
  Then, for any $\beta>0$, we have clearly
  a bound $\|g^\beta-f^\beta\|_{\frac 1\beta}\le o(\vp')$,  and hence, if we wish, we can find a density $g$ such that we actually  have 
 \begin{equation}\label{ric14}\|f^{\alpha}-g^{\alpha}\|_{\frac 1\alpha}<\vp'.\end{equation}
 
 Let $x'_k=(g^{\alpha} y_k+y_k g^{\alpha})/2$ and $S'=\sum \xi_k \otimes x_k'$. Note
 \begin{equation}\label{ric16}C_2(x')\le (\sum \|y_k\|_2)^{\frac 12} \le 2C'' |||x|||_p.\end{equation}
 By the proof of
 \eqref{ric13}  applied with $g$ in place of $f$
 we find
 $$C_q(x')\le C^2 (\delta(\vp)C_2(x')  +w(\vp)\|S'\|_p).$$
 By Step 1 we have
 $$|||x'|||_p\le C' C_q(x')\le C' C^2 (\delta(\vp)C_2(x')  +w(\vp)\|S'\|_p)$$
 and hence by \eqref{ric16}
 \begin{equation}\label{ric15} |||x'|||_p\le C' C^2 (2C''\delta(\vp) |||x|||_p  +w(\vp)\|S'\|_p).\end{equation}
 But clearly by H\"older  and \eqref{ric14}, we have an estimate $\|x_k-x_k'\|_p\le o(\vp')$
 and hence we have both $|||x-x'|||_p\le o(\vp')$ and $\|S-S'\|_p \le o(\vp')$. Thus, letting $\vp'\to 0$,  we deduce from \eqref{ric15}
 that
$$ |||x|||_p  \le C' C^2 (2C''\delta(\vp) |||x|||_p  +w(\vp)\|S\|_p).$$
 and we conclude as before that
 $$ |||x|||_p \le c'  \|S\|_p .$$
\end{rem}

\begin{rem}\label{ultra}

We give a sketch of a proof of \eqref{substi} using an ultraproduct argument.
Clearly by a $2\times2$ trick, we may assume that $x=x^*$.
Assuming  \eqref{substi} does not hold gives some $\varepsilon>0$, a sequence of elements 
$x_n\in L_2(M,\tau)$, $f_n\in \mathcal D$ with $\|x_n\|_2=1$, $x_n=x_n^*$
with $\|f_n^\alpha x_n + x_nf_n^\alpha\|_p\le \frac 1 n$ but $\|f_n^{\alpha(1-\theta)} x_n + x_nf_n^{\alpha(1-\theta)}\|_q \geq \varepsilon$.

 We use the theory of ultrapowers from \cite{Ray}. In the latter,
 Theorem 3.6 explains that given a free ultrafilter $\mathfrak U$ on $\N$,
 there is a general (type III) von Neumann algebra $\mathcal A$ so
 that there are natural identifications $\prod_{\mathfrak U}
 L_p(M,\tau)=L_p(\mathcal A)$ for $p>0$. Of course, taking powers and
 products commutes with the ultrapower construction (see Theorems 3.6
 and 5.1 in \cite{Ray}).

 Consider 
$x=(x_n)\in L_2(\mathcal A)$, $f=(f_n)\in L_2(\mathcal A)$. We have 
$x=x^*$ with $\|x\|_2=1$, $f\geq 0$ with $\|f\|_1=1$ and $\|f^\alpha x + xf^\alpha\|_p=0$ but $\|f^{\alpha(1-\theta)} x + xf^{\alpha(1-\theta)}\|_q \geq \varepsilon$.

From the definition of $L_p$-spaces associated to a type III von
Neumann algebra (see \cite{PX, terp}), $f$ and $x$ can be seen as
$\tau$-measurable operators associated to the core $\tilde {\mathcal
  A}$ of $\mathcal A$ which is semifinite with trace $\tau$. Recall
that the $\tau$-measurable operators $L_0(\tilde {\mathcal A},\tau)$
form a topological $*$-algebra, we have $f^\alpha x=-xf^\alpha$. Hence
$f^{2\alpha} x=-f^\alpha xf^\alpha=xf^{2\alpha}$, as $x=x^*$ so that
$f^{2\alpha}$ and $x$ strongly commute (see Lemma 2.3 in \cite{Kos3}).
Thus $x$ commutes with any spectral projection of $f^{\alpha}$. But
spectral projections of $f$ and $f^t$ coincide for any $t>0$, we get
that $x$ and $f^t$ commute.  We have $\|2xf^\alpha\|_p=0$, but for any
spectral projection $p=1_{(a,b)}(f)$ with $0<a<b<\infty$, there is some 
$v\in \tilde {\mathcal A}$ with $f^\alpha v=pf^{\alpha(1-\theta)}$, so $x f^{\alpha(1-\theta)}p=0$. Letting $a\to 0$ and $b\to \infty$, $pf^{\alpha(1-\theta)}$ converges to 
$f^{\alpha(1-\theta)}$ in $L_0(\tilde {\mathcal A},\tau)$, hence 
$xf^{\alpha(1-\theta)}=0$. This contradicts $\|2xf^{\alpha(1-\theta)}\|_q\geq \varepsilon$.

 The trace $\tau$ on $M$ does not play any r\^ole in the above argument. Thus \eqref{substi} holds for any type III von Neumann algebra $M$ with $f\in L_1(M)^+$ and 
$\|f\|_1=1$.  

\end{rem}

\section{The new case $\pmb{0<p<1}$ }\label{sec2}

The proofs in this section are valid
for $0<p <2$ but are really pertinent only for 
$0<p<1$. For simplicity, to avoid distinguishing
the normed case from the $p$-normed one,
 we assume $0<p<1$ throughout. We will compensate for the lack of convexity
 with   subharmonicity. Indeed, it is well known that
 on an $L_p$-space, commutative or not,
 the  norm, as well as the function $x\mapsto \|x\|^p_p$,  is subharmonic.
 We will use moreover certain inequalities which express its
  ``uniform subharmonicity", in analogy with the uniform
  convexity of $L_p$ when $p>1$.

  Let $0<p<s\le \infty$.
  In this section,  we set $$\alpha=\frac 1r =\frac 1 p-\frac 1 s.$$
  The previous section corresponds to the particular value  $s=2$.
  
  Let $x$ be in  $L_s(\tau)$, and let $f\in L_1^+$ with $\|f\|_1=1$.
  \def\a{\alpha}
  Note that $\|f^\a\|_r=1$.
  Let $0<\theta<1$. Let $q$ be determined by
  $$\frac{1}{q}=\frac{1-\theta}{p} +\frac{\theta}{s}.$$
  
  Our main result is a new form of non-commutative H\"older inequality:
  
  \begin{thm}\label{nho} Let  $0<p<q<s\le \infty$. Let $\alpha, \theta$ be as above. Then for any $0<R<p$ there is a constant $C$ such that
  for any  $x\in L_s(\tau)$ and  $f\in L_1(\tau)^+$ with $\|f\|_1=1$,
  and for any unitaries $V,\,W\in M$ commuting with $f$
  we have
  \begin{equation}\label{riceq2.0}\big\|x Wf^{\a(1-\theta)} +V f^{\a(1-\theta)}  x\big\|_q\le C\big\| x Wf^{\a } +V f^{\a }  x   \big\|_p^{\frac{R}{2}(1-\theta)}
  \|x\|_s ^{1-\frac{R}{2}(1-\theta)}.\end{equation}
  In particular for any choice of sign $\pm 1$ we have
 \begin{equation}\label{riceq2.0b}\big\|x f^{\a(1-\theta)} \pm f^{\a(1-\theta)}  x\big\|_q\le C\big\| x f^{\a } \pm f^{\a }  x   \big\|_p^{\frac{R}{2}(1-\theta)}
  \|x\|_s ^{1-\frac{R}{2}(1-\theta)}.\end{equation}
  \end{thm}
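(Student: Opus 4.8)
The plan is to carry over the three-lines / Stein interpolation scheme that underlies the case $p\ge 1$ in \cite{P5}, replacing the one step that uses duality (and boundedness of triangular truncation on $L_q$) by a quantitative \emph{uniform subharmonicity} estimate for the quasi-norm of $L_p(\tau)$, $0<p<1$. Two harmless reductions come first. By a $2\times2$ trick as in Remark \ref{ultra} (pass from $(M,\tau)$ to $(M_2\otimes M,\operatorname{tr}\otimes\tau)$, replace $x$ by its self-adjoint ampliation $\left(\begin{smallmatrix}0&x\\ x^{*}&0\end{smallmatrix}\right)$ and $f,V,W$ by $1\otimes f$, $1\otimes V$, $1\otimes W$), I may assume $x=x^{*}$. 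Approximating $f$ in $L_1(\tau)$ by a density $g$ of finite spectrum whose spectral resolution commutes with that of $f$ (and with $V,W$), one has, as in \eqref{ric14}, $\|f^{\beta}-g^{\beta}\|_{1/\beta}$ as small as we please for every $\beta>0$; since $\tfrac1p=\tfrac1s+\a$ and $\tfrac1q=\tfrac1s+\a(1-\theta)$, H\"older makes both sides of \eqref{riceq2.0} continuous in $f$, so with a constant uniform over finite-spectrum densities it suffices to prove \eqref{riceq2.0} when $f$ (hence $f^{\a}$ and $f^{\a(1-\theta)}$) has finite spectrum. Finally \eqref{riceq2.0b} is the case $V=W=\pm1$ of \eqref{riceq2.0}.

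Put $g=f^{\a}$ (so that $\|g\|_{r}=1$) and $\lambda=1-\theta$, and on the strip $\Sigma=\{z:0\le\Re z\le1\}$ introduce the bounded analytic function
$$\Phi(z)=xWg^{\,1-z}+Vg^{\,1-z}x ,$$
which takes values in $L_{s}(\tau)$, and also in $L_{q}(\tau)$ and $L_{p}(\tau)$ (here the finiteness of the spectrum of $g$ and $\|g\|_{r}=1$ are used). Then $\Phi(\theta)=xWg^{\lambda}+Vg^{\lambda}x$ is the left side of \eqref{riceq2.0}, $\Phi(0)=xWf^{\a}+Vf^{\a}x$ is the element appearing on the right (for the sign $+$), and on the edge $\Re z=1$ one has $\Phi(1+it)=xWf^{-i\a t}+Vf^{-i\a t}x$, whence $\|\Phi(1+it)\|_{s}\le C\|x\|_{s}$ for all $t$, since $f^{-i\a t}$ is unitary. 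On the edge $\Re z=0$, using that $V$ and $W$ commute with $g$, one computes $\Phi(it)=xg^{-it}Wg+g^{-it}Vgx$, so that $\Phi(it)-g^{-it}\Phi(0)=[x,g^{-it}]Wg$; since $\|Wg\|_{r}=\|g\|_{r}=1$, H\"older and the $p$-triangle inequality \eqref{tri2} yield $\|\Phi(it)\|_{p}\le C\bigl(\|xWf^{\a}+Vf^{\a}x\|_{p}+\|x\|_{s}\bigr)$ for all $t$. Note that $\theta$ is the barycentre of the two edges of $\Sigma$, with weight $1-\theta$ on $\Re z=0$ and $\theta$ on $\Re z=1$, matching $\tfrac1q=\tfrac{1-\theta}{p}+\tfrac{\theta}{s}$.

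For $p\ge1$ one would now invoke Stein's interpolation theorem to bound $\|\Phi(\theta)\|_{q}$ by the geometric mean of the two edge bounds, which relies on $L_{p}$ being a Banach space (so that $\log\|\Phi(z)\|_{p}$ is a supremum of harmonic functions). For $0<p<1$ this is unavailable, and the substitute is subharmonicity: for an $L_{p}(\tau)$-valued analytic function $F$ both $z\mapsto\|F(z)\|_{p}^{\,p}$ and $z\mapsto\log\|F(z)\|_{p}$ are subharmonic, and what is needed is a \emph{quantitative} refinement of the sub-mean-value inequality, in analogy with the Clarkson / uniform-convexity inequalities valid for $p>1$: roughly, if the boundary mean of $\|F\|_{p}^{\,p}$ exceeds $\|F(0)\|_{p}^{\,p}$ only slightly then $F$ is correspondingly close to a constant, the defect being measured with an exponent $R$ strictly below $p$. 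Feeding such an inequality into the Poisson representation on $\Sigma$ (applied to $\Phi$ after composition with an auxiliary outer scalar function that turns the variable-exponent interpolation into a fixed-exponent mean-value estimate and redistributes the boundary mass according to the weights $1-\theta$ and $\theta$), and then cleaning up with the $p$-triangle inequality and H\"older on $\tfrac1q=\tfrac{1-\theta}{p}+\tfrac{\theta}{s}$, should give \eqref{riceq2.0}, but with the exponent $\tfrac{R}{2}(1-\theta)$ in place of the $(1-\theta)$ that genuine convexity would yield, precisely because one is forced to take $R<p$. Specializing $V=W=\pm1$ then gives \eqref{riceq2.0b}.

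The step I expect to be the main obstacle is establishing the correct quantitative uniform subharmonicity inequality on non-commutative $L_{p}$ for $0<p<1$: formulating it, proving it (presumably by a direct analysis of $\zeta\mapsto\|a+\zeta b\|_{p}^{\,p}$ together with $p$-th power subadditivity, sharpening the bare mean-value inequality), and, above all, arranging the exponents so that after H\"older a strictly positive power of $\|xWf^{\a}+Vf^{\a}x\|_{p}$ still survives on the right; this is exactly what forces some $R<p$, and hence the unexpected exponent in \eqref{riceq2.0}. A secondary point is the handling of $\Phi$ on the $L_{p}$-edge: since Lemma \ref{lem1.6} and triangular truncation are unavailable below $p=1$, the passage between the $V,W$-twisted expressions $xWg^{\bullet}+Vg^{\bullet}x$ and a plain commutator must be done by hand, as indicated above.
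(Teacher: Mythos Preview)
Your overall shape (the reductions to $x=x^{*}$ and finite-spectrum $f$, a strip function, complex interpolation for $p<1$, and a quantitative subharmonicity input) matches the paper's. But there is a real gap at the crucial step, and it is exactly the one you flag as ``the main obstacle'' without resolving.

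You plan to feed Xu's uniform-subharmonicity inequality directly into your $\Phi(z)=xWg^{1-z}+Vg^{1-z}x$ (possibly after a scalar outer factor). This cannot work: Xu's inequality \eqref{riceq2.3} is only informative when the boundary $L_R$-average of $\|F\|_R^2$ exceeds $\|F(\text{center})\|_R^2$ by a \emph{small} amount, and for your $\Phi$ the boundary values on $\Re z=0$ satisfy merely $\|\Phi(it)\|_p\le C(\|\Phi(0)\|_p+\|x\|_s)$, with a defect of order $\|x\|_s$, not of order $\|\Phi(0)\|_p$. A scalar outer weight rescales both sides equally and does not cure this. The identity $\Phi(it)-g^{-it}\Phi(0)=[x,g^{-it}]Wg$ is correct, but bounding $\|[x,g^{-it}]\|_s$ by $\|x\|_s$ gives nothing that vanishes with $\|\Phi(0)\|_p$.

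The paper's decisive idea is to apply Xu's inequality to a \emph{different} analytic function, the one-sided product $F(\zeta)=f^{\gamma\alpha\zeta}xf^{\gamma\alpha(1-\zeta)}$ with an auxiliary stretching parameter $\gamma>1$. Because $x=x^{*}$, this $F$ has \emph{constant} $L_R$-norm on the entire boundary of the strip (equal to $\|xf^{\gamma\alpha}\|_R$), so \eqref{riceq2.3} genuinely controls the deviation $F-F(1/\gamma)$. One then checks algebraically that $\|xf^{\gamma\alpha}\|_R^R-\|F(1/\gamma)\|_R^R\le\|xWf^\alpha+Vf^\alpha x\|_p^R$, which makes the defect small, and that the resulting bound on $H(t)=u_tf^{\gamma\alpha}x-f^{\alpha}xf^{\alpha(\gamma-1)}u_t$ (with $u_t=f^{\gamma\alpha it}$) controls $G(it)=Vf^{\gamma\alpha(1-it)}x+xWf^{\gamma\alpha(1-it)}$ via the identity $G(it)=VH(-t)+(Vf^{\alpha}x+xf^{\alpha}W)f^{\alpha(\gamma-1)}u_{-t}$. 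Only then does one apply the three-lines estimate (Lemma~\ref{cpp}, itself requiring a Szeg\H{o} factorization to reach exponents below~$1$) to $G$ at $\omega=1-(1-\theta)/\gamma$; the explicit comparability of the Poisson kernels $Q^{1}_{1/\gamma}$ and $Q^{0}_{\omega}$ is needed to transfer the $H$-bound to the correct boundary measure. Letting $\gamma\to1$ gives the exponent $\tfrac{R}{2}(1-\theta)$.

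Two minor points. Your $2\times2$ trick with $\tilde V=1\otimes V$, $\tilde W=1\otimes W$ does not produce a self-adjoint off-diagonal block; the paper takes $\tilde V=\mathrm{diag}(V,W^{*})$, $\tilde W=\mathrm{diag}(V^{*},W)$ so that the two off-diagonal entries are mutual adjoints. And Lemma~\ref{lem1.6} plays no role in the proof of Theorem~\ref{nho}; it is used only in Remark~\ref{r-substi} (where $q>1$).
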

  
  Since this implies that \eqref{substi} holds with $\delta(\vp)=O(\vp^{\frac{R}{2}(1-\theta)}
)$ (with $s=2$), by Remark
  \ref{r-substi}
  we have
  \begin{cor} The implication
  $K_q\Rightarrow K_p$ remains valid for any $0<p<1$.
  In particular the non-commutative Khintchine inequality
  \eqref{eq0.1} holds for any $0<q<1$.
  \end{cor}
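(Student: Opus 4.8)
The plan is to obtain the Corollary by feeding Theorem \ref{nho} into the extrapolation scheme already set up in Remark \ref{r-substi}; no new estimate is needed, only the identification of the exponent. First I would specialize Theorem \ref{nho} to $s=2$, so that $\alpha=\frac1p-\frac12=\frac1r$ and $q$ is fixed by $\frac1q=\frac{1-\theta}{p}+\frac\theta2$, which is exactly the normalization used in \eqref{substi}. Taking $V=W=1$ and the $+$ sign in \eqref{riceq2.0b} gives, for every density $f\in\cl D$ and every $x\in L_2(\tau)$,
\[
\|f^{\alpha(1-\theta)}x+xf^{\alpha(1-\theta)}\|_q\le C\,\|f^{\alpha}x+xf^{\alpha}\|_p^{\frac{R}{2}(1-\theta)}\,\|x\|_2^{\,1-\frac{R}{2}(1-\theta)},
\]
where $R$ is any fixed number with $0<R<p$ and $C$ depends only on $p,q,R$. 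Restricting to $\|x\|_2\le1$ and $\|f^{\alpha}x+xf^{\alpha}\|_p\le\vp$ bounds the right-hand side by $C\vp^{\frac{R}{2}(1-\theta)}$; hence \eqref{substi} holds with $\delta(\vp)=C\vp^{\frac{R}{2}(1-\theta)}$, and since $\frac{R}{2}(1-\theta)>0$ we have $\delta(\vp)\to0$ as $\vp\to0^+$, which is all that \eqref{substi} requires of $\delta$.

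Next I would invoke Remark \ref{r-substi}: it shows that, for a fixed $q$ with $1<q<2$, the hypothesis \eqref{substi} can be substituted for Step \ref{stp3} in the three-step extrapolation of Theorem \ref{thm1.1}, and that the resulting argument goes through for every $0<p<1$ --- the finite-spectrum case by applying Lemma \ref{lem1.6} twice (to pass between the elements $T$ and $Z$ there), and the general semifinite case by approximating $f$ by finite-spectrum densities, as in the passage leading to \eqref{ric15}. Since Steps \ref{stp1} and \ref{stp2} do not use $p\ge1$, combining them with the substituted Step \ref{stp3} yields $|||x|||_p\le c'\|S\|_p$ for every finite sequence $x=(x_k)$ in $L_p(\tau)$, i.e.\ precisely $(K_p)$. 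Thus the implication $(K_q)\Rightarrow(K_p)$ holds for every $0<p<1$, which is the first assertion of the Corollary.

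For the second assertion I would take $(\xi_k)=(r_k)$, the Rademacher functions, which are orthonormal in $L_2([0,1])$, and recall that $(K_{q_0})$ holds in this case for some $q_0$ with $1<q_0<2$ (say $q_0=\tfrac32$; in fact for every $1<q_0<\infty$): this is the theorem of Lust-Piquard \cite{LP1}. Applying the first assertion with this $q_0$ gives $(K_p)$ for every $0<p<q_0$, in particular for every $0<p<1$; unwinding the definition of $(K_p)$ in the Rademacher case, this says exactly that
\[
|||x|||_p\le\beta_p\Big(\int\big\|\sum_k r_k(t)x_k\big\|_p^p\,dt\Big)^{1/p}
\]
for every finite sequence $x=(x_k)$, i.e.\ \eqref{eq0.1} for every $0<p<1$. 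Renaming the exponent, \eqref{eq0.1} holds for every $0<q<1$.

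There is no real obstacle here once Theorem \ref{nho} is granted; the only point to keep in mind is that the exponent $\frac{R}{2}(1-\theta)$ delivered by Theorem \ref{nho} is a fixed positive number independent of $x$ and $f$, so that $\delta(\vp)=C\vp^{\frac{R}{2}(1-\theta)}$ genuinely tends to $0$ --- and it is the whole force of Remark \ref{r-substi} that nothing stronger (in particular not the ``expected'' exponent $1-\theta$) is needed to run the extrapolation for $0<p<1$.
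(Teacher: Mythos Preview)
Your proposal is correct and follows essentially the same route as the paper: the paper's entire argument for this Corollary is the single sentence preceding it, namely that Theorem \ref{nho} (with $s=2$) yields \eqref{substi} with $\delta(\vp)=O(\vp^{\frac{R}{2}(1-\theta)})$, whence Remark \ref{r-substi} applies. You have simply written this out in more detail, including the appeal to Lust-Piquard's result for the Rademacher case, which the paper leaves implicit.
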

 
  \begin{cor} 
  Let $0<p(1),p(2)<\infty$ and let $u:\ L_{p(1)}(\tau)\to L_{p(2)}(\tau)$ be any bounded linear operator.
  Then there is a constant $C(p(1),p(2))$ such that for any
finite sequence $(x_j)$ in   $L_{p(1)}(\tau)$ we have
$$|||(u(x_j))|||_{p(2)} \le C(p(1),p(2))\ \|u\|\, |||(x_j)|||_{p(1)} .$$
  \end{cor}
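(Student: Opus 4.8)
The plan is to deduce this from the non-commutative Khintchine inequalities, which --- by the preceding corollary together with the classical cases $q\ge 1$ recalled in the introduction --- are now available in the whole range $0<q<\infty$. The mechanism is that, up to constants depending only on the exponent, $|||(x_j)|||_p$ equals the $L_p$-norm over $[0,1]\times(M,\tau)$ of the Rademacher series $\sum_j r_j\otimes x_j$; on such an object --- a function of $t\in[0,1]$ taking finitely many values in $L_p(\tau)$ --- the merely linear map $u$ may be applied pointwise in $t$, and this is what lets one bypass the absence of any positivity or complete boundedness of $u$. So I would first dispose of the trivial case $u=0$, rescale so that $\|u\|=1$, write $p=p(1)$ and $q=p(2)$, and fix a finite sequence $(x_j)$ in $L_p(\tau)$.

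Then I would run the following chain. By the nontrivial (lower) bound in \eqref{khint}, applied in $L_q(\tau)$ to the sequence $(u(x_j))$, one has $|||(u(x_j))|||_q\le\beta_q(\int_0^1\|\sum_j r_j(t)u(x_j)\|_q^q\,dt)^{1/q}$. Linearity of $u$ and finiteness of the sum give $\sum_j r_j(t)u(x_j)=u(\sum_j r_j(t)x_j)$, so boundedness of $u:L_p(\tau)\to L_q(\tau)$ yields the pointwise estimate $\|\sum_j r_j(t)u(x_j)\|_q\le\|\sum_j r_j(t)x_j\|_p$ for all $t\in[0,1]$. Writing $g(t)=\|\sum_j r_j(t)x_j\|_p$, it remains to bound $(\int_0^1 g^q)^{1/q}$ by $|||(x_j)|||_p$. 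Since $t\mapsto\sum_j r_j(t)x_j$ is a Rademacher series valued in the $p$-normed space $L_p(\tau)$, Kahane's inequality in the quasi-Banach setting (\cite{Kw}, see also \cite{LQ}, as invoked in Remark \ref{easybound}) gives $(\int_0^1 g^q)^{1/q}\le K(p,q)(\int_0^1 g^p)^{1/p}$, with $K(p,q)=1$ when $q\le p$ by Jensen's inequality on $[0,1]$. Finally the easy (upper) bound for $L_p(\tau)$ --- Remark \ref{easybound} when $p\le 2$, the classical Lust-Piquard/Buchholz estimate when $p>2$ --- gives $(\int_0^1 g^p)^{1/p}\le\alpha_p|||(x_j)|||_p$. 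Multiplying the constants (and restoring the factor $\|u\|$) yields the statement with $C(p(1),p(2))=\beta_{p(2)}\alpha_{p(1)}K(p(1),p(2))$.

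I do not anticipate a real obstacle: all the genuine difficulty has already been absorbed into the fact, proved in this paper, that \eqref{eq0.1} extends to $0<q<1$. The two points that merely require care are that Khintchine must be applied in the correct direction in each of $L_p(\tau)$ and $L_q(\tau)$ according to whether the exponent lies below or above $2$ --- so that the ``hard'' constant enters only through $\beta_{p(2)}$ when $p(2)<2$ and through $\alpha_{p(1)}$ when $p(1)>2$ --- and that the interchange of the exponents $p(1)$ and $p(2)$ uses Kahane's inequality for a $p$-normed target with $p$ possibly less than $1$, which is exactly the statement recorded with references in Remark \ref{easybound}.
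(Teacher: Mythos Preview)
Your proof is correct and follows essentially the same route as the paper: apply the left-hand Khintchine bound in $L_{p(2)}(\tau)$, use $u$ pointwise in $t$, invoke Kahane's inequality in the quasi-normed setting to pass from the $L_{p(2)}$-average in $t$ to the $L_{p(1)}$-average, and finish with the right-hand Khintchine bound in $L_{p(1)}(\tau)$. The paper's proof is just a compressed version of exactly this argument.
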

    \begin{proof} Since this is clear when $|||(x_j)|||_{p(1)}$ and $|||(u(x_j))|||_{p(2)} $
    are replaced by the corresponding Rademacher averages,  this corollary follows
    from the Khintchine inequalities, now extended to the whole range
    $0<p<\infty$. Note that it is well known that the Kahane inequalities
    remain valid for quasi-normed spaces.
   \end{proof}
   
   We denote by $U=\{z\in \C\mid 0<\Re(z)<1\}$ the classical  vertical strip
  of unit width of the complex plane.
  
  \begin{rem} It seems worthwhile to start by a rough outline 
   of the proof of Theorem \ref{nho}.
   The natural way to prove \eqref{riceq2.0} (and we will use this in the end)
    is to introduce the analytic function
    $G_0( z)= xWf^{\alpha(1-z)} + V f^{\alpha(1-z)}x$ defined for $z\in U$
    and to use some form of the 3-line lemma, as in \eqref{riceq2.9}  below, 
    to estimate $\|G_0(\theta)\|_q$. In order to do so, we need to 
    have 
    bounds on the two  boundary vertical lines. The bound 
    for $z=1+it$ of the form  $\|G_0(1+it)\|_s\le c\|x\|_s$ 
    is straightforward. The problem is 
    the missing bound  $$\|G_0(it)\|_p\le c\|G_0(0)\|_p \quad ?$$
    which seems highly unrealistic. However, it turns out 
    that using the complex uniform convexity 
    (and the simple algebraic form of $G_0$) as in  \eqref{riceq2.3},
    we will be able to majorize, for any fixed $\gamma>1$, the function
    $$    G( z)=G_0(\gamma z+1-\gamma)= x W f^{\gamma \alpha(1-z)} + V f^{\gamma\alpha(1-z)}x,$$
    so that we have for    $0<\omega<1$ and $0<R<p$ such that $\frac1q= \frac{1-\omega}{R} + \frac{\omega}{s}  $
    $$\Big(\int \big\|G(it)\big\|_R^R Q^0_\omega(dt)\Big)^{\frac 1R} \lesssim\big\|x\big\|_s^{1-\frac{R}{2}} \big\|G_0(0)\big\|^{\frac{R}{2}}_p.$$
    Again we have $\|G(1+it)\|_s\le c\|x\|_s$. Denote $1-\omega= \frac {1-\theta}\gamma$ so that $G( {\omega} )=G_0(\theta)$.
    Thus, applying the 3-line type argument (see \eqref{riceq2.9}  below)   to  the function $G$,   we obtain a bound of the form
  $$\big\|G_0(\theta)\big\|_q =\big\|G( {\omega} )\big\|_q\lesssim \Big(  \big\|x\big\|_s^{1-\frac{R}{2}} \big\|G_0(0)\big\|^{\frac{R}{2}}_p\Big)^{1-\omega} \big\|x\big\|_s^\omega$$
which yields \eqref{riceq2.0}.

   \end{rem}
   We will use the well known fact that complex interpolation
  remains valid for the  $L_p(\tau)$ spaces in the range $0<p<1$.
  We will need just one direction.
  (Note however that the argument given for this fact
  for Schatten classes at the end of \cite{CPP}
  is erroneous.) For simplicity we restrict the discussion here to the finite case.

  \begin{lem}\label{cpp} Assume given $(M,\tau)$ as before with $\tau$ finite.
  Let $0<p_0<p_1\le \infty$.
  Let $G$ be a bounded analytic function on $U$ with
  values in $L_{p_1}(\tau)$,  
  admitting a.e. non-tangential boundary values. 
  Let us set for $j=0,1$
  $$g(j+it)=\big\|G(j+it)\big\|_{L_{p_j}(\tau)}.$$
  For any $0<\theta <1$, let $p_\theta$ be such that $\frac{1}{p_\theta}=\frac{1-\theta}{p_0}+\frac{\theta}{p_1}$.
 Let $\P_U^\theta$ denote the probability  measure
which is the  harmonic measure
associated to $\theta$ with respect to  $U$.
  We have then
   \begin{equation}\label{riceq2.8}\log \|G(\theta)\|_{L_{p_\theta}(\tau)   }\le \int_{\partial U} 
   \log g(\xi)\  \P_U^\theta(d\xi).\end{equation}
Let $Q^0_\theta$ (resp. $Q^1_\theta$)
 be  the probability on $\{\xi\in \C\mid \Re(\xi)=0\}$ (resp.    $\{\xi\in \C\mid \Re(\xi)=1\}$) such that $\P_U^\theta=(1-\theta) Q^0_\theta  + \theta Q^1_\theta$, we have then
  \begin{equation}\label{riceq2.9}  \|G(\theta)\|_{L_{p_\theta}(\tau)   }\le \left(\int g(it)^{p_0} \ Q^0_\theta(dt)\right)^{\frac {1-\theta} {p_0}} \left( \int g(1+it)^{p_1} \ Q^1_\theta(dt)\right)^{\frac \theta {p_1}} .\end{equation}
  \end{lem}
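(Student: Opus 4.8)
The plan is to reduce \eqref{riceq2.8} to the scalar subharmonicity of $z\mapsto \log\|G(z)\|_{L_{p_z}(\tau)}$, where $\frac{1}{p_z}=\frac{1-z}{p_0}+\frac{z}{p_1}$, and then to derive \eqref{riceq2.9} by splitting the harmonic measure $\P_U^\theta$ into its two boundary pieces and applying H\"older on each. The first reduction is the classical device of multiplying by an outer function: fix $\theta$ and let $h$ be the bounded analytic function on $U$ with $|h(j+it)| = g(j+it)^{-1}$ on $\partial U$ (an outer function built from $-\log g$, which is integrable against $\P_U^\theta$ because $G$ is bounded with values in $L_{p_1}$ and, after reducing to the finite case, $L_{p_1}\subset L_{p_0}$ so $g$ is bounded below only in the sense of being $\P_U^\theta$-integrable from above — one must be a little careful and work with $\max(\log g,-m)$ and let $m\to\infty$). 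Then $F(z) = h(z)^{?}G(z)$ should be arranged so that $\|F(j+it)\|_{p_j}\le 1$ on both boundary lines; the exponent on $h$ has to be chosen to match the varying $p_z$, i.e.\ one really wants $|h(z)|^{1/p_z}$-type weights, which is why it is cleaner to phrase everything logarithmically.

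Concretely, I would argue as follows. It suffices to show $\|G(\theta)\|_{p_\theta}\le 1$ under the normalization $\int_{\partial U}\log g\,\P_U^\theta = 0$ (after the usual truncation to make $\log g$ bounded, then removing the truncation by monotone/dominated convergence). Using that complex interpolation holds for $L_p(\tau)$, $0<p<1$ — precisely the one direction we are allowed to assume — one has for each fixed $z_0=\theta$ and each $\varepsilon>0$ a factorization exhibiting $G(\theta)$ as an element of the interpolation space, and the norm there is controlled by the product of the boundary sup-norms after twisting by an outer function that absorbs $\exp(-\int\log g\,\P_U^\theta)=1$. The key analytic input is that for an analytic $L_{p_1}$-valued $G$, the scalar function $u(z)=\log\|G(z)\|_{p_z}$ is subharmonic on $U$: this follows because $\|\cdot\|_{p}^{p}$ is plurisubharmonic on any $L_p$ (commutative or not), hence $z\mapsto p_z\,u(z)=\log\|G(z)\|_{p_z}^{p_z}$ is a limit of logarithms of subharmonic functions, and $z\mapsto 1/p_z$ is affine and positive, so $u$ itself is subharmonic. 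Subharmonicity of $u$ on $U$ together with its boundedness above (from boundedness of $G$) gives $u(\theta)\le \int_{\partial U}u^*\,\P_U^\theta$ where $u^*$ is the boundary value, which is exactly \eqref{riceq2.8}.

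For \eqref{riceq2.9}: write $\P_U^\theta = (1-\theta)Q^0_\theta + \theta Q^1_\theta$ as in the statement, so that \eqref{riceq2.8} becomes
\[
\log\|G(\theta)\|_{p_\theta} \le (1-\theta)\int \log g(it)\,Q^0_\theta(dt) + \theta\int\log g(1+it)\,Q^1_\theta(dt).
\]
Exponentiating and applying Jensen's inequality in the form $\exp\int\log\varphi\,d\mu \le \left(\int\varphi^{r}\,d\mu\right)^{1/r}$ separately to each boundary integral — with $r=p_0$ for the $Q^0_\theta$ term and $r=p_1$ for the $Q^1_\theta$ term — yields
\[
\|G(\theta)\|_{p_\theta}\le \left(\int g(it)^{p_0}\,Q^0_\theta(dt)\right)^{\frac{1-\theta}{p_0}}\left(\int g(1+it)^{p_1}\,Q^1_\theta(dt)\right)^{\frac{\theta}{p_1}},
\]
which is \eqref{riceq2.9}. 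The main obstacle is the first step: establishing \eqref{riceq2.8} rigorously when $p_0<1$, since the usual Hadamard three-lines proof rests on $L_p$ being a Banach space. The safe route is to invoke the (correctly proved) complex interpolation theorem for $L_p(\tau)$, $0<p<1$, in the finite case, reducing \eqref{riceq2.8} to the subharmonicity of $u(z)=\log\|G(z)\|_{p_z}$ as above; the subtlety there is merely the integrability of $\log g$ against the harmonic measure, handled by truncation since $\tau$ is finite so $g(1+it)\le \tau(1)^{1/p_1-1/p_1}\cdot(\sup\|G\|_{p_1})$ is bounded above and $g(it)\le \tau(1)^{1/p_0-1/p_1}g$-type comparisons keep everything finite.
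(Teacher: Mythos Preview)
Your derivation of \eqref{riceq2.9} from \eqref{riceq2.8} via Jensen on each boundary piece is correct and matches the paper. The difficulty lies entirely in \eqref{riceq2.8}, and there your argument has a real gap.

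You assert that $u(z)=\log\|G(z)\|_{p_{\Re z}}$ is subharmonic, arguing: (i) plurisubharmonicity of $\|\cdot\|_p^p$ makes $p_z u(z)=\log\|G(z)\|_{p_z}^{p_z}$ ``a limit of logarithms of subharmonic functions''; (ii) since $1/p_z$ is positive and affine, $u=(1/p_z)\cdot(p_z u)$ is subharmonic. Step (i) is not justified: plurisubharmonicity of $\|\cdot\|_p^p$ is a statement for \emph{fixed} $p$ and says nothing about $\|G(z)\|_{p_{\Re z}}^{p_{\Re z}}$ with the exponent moving with $z$. Step (ii) is simply false in general: the product of a positive harmonic function with a subharmonic one need not be subharmonic. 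On the strip take $a(z)=1+\varepsilon\,\Re z$ and $v(z)=-C\,\Re z+\delta|z|^2$; then $\Delta v=4\delta>0$, but $\Delta(av)=(1+\varepsilon\,\Re z)\,4\delta+2\varepsilon(-C+2\delta\,\Re z)$ is negative near $\Re z=0$ once $C>2\delta/\varepsilon$. Your fallback of ``invoking the complex interpolation theorem for $L_p(\tau)$, $0<p<1$'' is circular here: \eqref{riceq2.8} \emph{is} that theorem in the direction needed, and the paper explicitly warns that the argument for it in \cite{CPP} is erroneous. The outer-function normalization you sketch at the start is only a reduction; after it you still need a three-lines bound, which the Banach-space Hadamard argument does not supply when $p_0<1$.

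The paper's route is genuinely different and never attempts subharmonicity with a moving exponent. It bootstraps: assuming \eqref{riceq2.8} for a pair $(p_0,p_1)$, it deduces it for $(p_0/2,p_1/2)$ via an operator-valued Szeg\"o factorization $G=G_1G_2$ on $\partial U$ with $G_1G_1^*\le|G^*|$ and $G_2^*G_2\le|G|+\varepsilon 1$, applies the assumed inequality to each factor in $L_{p_j}$, combines via H\"older $\|G(\theta)\|_{p_\theta/2}\le\|G_1(\theta)\|_{p_\theta}\|G_2(\theta)\|_{p_\theta}$, and lets $\varepsilon\to0$. Iterating from the classical case $p_0\ge1$ reaches all $p_0>0$. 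This Szeg\"o factorization step is the idea missing from your proposal.
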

  \begin{proof}[Sketch] We freely use conformal equivalence
  with the disc to justify the technical points.  
    \\ The inequality \eqref{riceq2.8} is well known when $p_0\ge 1$.
  We will show that if it holds for a pair $p_0,p_1$
  then it also holds for the pair $\frac {p_0}2,\frac {p_1}2$. This clearly suffices to cover the whole range $0<p_0<p_1\le \infty$. \\
      Fix $\vp>0$.
  Let $$\forall \xi \in \partial U\quad w(\xi)= (G(\xi)^* G(\xi))^{\frac 12} +\vp 1=|  G(\xi)|+\vp 1.$$
Following a well established tradition, we claim that $G$ admits a factorization
as a product  of analytic functions on $U$:
$$G=G_1G_2$$
satisfying $$G_1G_1^* \le |G^*|=(GG^*)^{\frac 12}
\text{ and } G_2^*G_2 \le w \text{ on }  \partial U  .$$
Indeed, by the classical operator valued analogue
of Szeg\"o's theorem (see e.g. Th. 8.1 in \cite{PX} and the Remark (i) after it), there is a bounded analytic
function $F$ with values in $L_{2p_1}(\tau)$
such that
$$\forall \xi \in \partial U\quad F(\xi)^* F(\xi)= w(\xi).$$
Moreover, since $w\ge \vp 1$, 
the function $z \mapsto F(z)^{-1}$ is well defined and bounded on $U$.
Let then 
$G_1=G F^{-1}$ and $G_2=F$.    
Let $G=u|G|$ be the polar decomposition. Then, as is classical, we have
$GG^*=u|G|^2u^*$ and $|G^*|=u|G|u^*$. 
Moreover, 
since $\lambda \mapsto \lambda^{\frac 12} (\lambda+\vp)^{-1} \lambda^{\frac 12}  $ is at most $1$ on $\R_+$, we clearly have $|G|^{\frac 12} w^{-1} |G|^{\frac 12} \le 1$.
Thus
we have on $\partial U$
$$G_1G^*_1= G (F^*F)^{-1} G^*=G w^{-1} G^*=u|G|w^{-1}|G| u^* \le u|G|u^*= |G^*|.$$
This proves our claim.\\
Let $g_1(j+it)=\big\|G_1(j+it)\big\|_{L_{p_j}(\tau)}$
and $g_2(j+it)=\big\|G_2(j+it)\big\|_{L_{p_j}(\tau)}$.
Assume the Lemma holds   for the pair $p_0,p_1$.
Then $G_1$ and $G_2$ satisfy \eqref{riceq2.8}. Therefore
since by H\"older
$$\big\|G(\theta)\big\|_{L_{  {p_\theta}/2}(\tau)  } \le \big\|G_1(\theta)\big\|_{L_{p_\theta}(\tau)  }\big\|G_2(\theta)\big\|_{L_{p_\theta}(\tau)  }
$$
and also
$ \big\|G_1(j+it)\big\|^2_{L_{p_j}(\tau)}=\big\|G(j+it)\big\|_{L_{\frac {p_j}2}(\tau)} $
and $ \big\|G_2(j+it)\big\|^2_{L_{p_j}(\tau)}=\big\|w(j+it)\big\|_{L_{ \frac {p_j}2}(\tau)} $,
if we add the two inequalities \eqref{riceq2.8} written for $G_1$ and $G_2$
we obtain 
$$2\log \big\|G(\theta)\big\|_{L_{ {p_\theta}/2}(\tau)   }\le (1-\theta) \int 
   \log \big\|G(it)\big\|_{L_{  {p_0}/2}(\tau)} Q^0_\theta(dt)
   +\theta \int 
   \log \big\|G(1+it)\big\|_{L_{  {p_1}/2}(\tau)} Q^1_\theta(dt)$$
   $$ +          (1-\theta) \int 
   \log \big\|w(it)\big\|_{L_{  {p_0}/2}(\tau)} Q^0_\theta(dt)
   +\theta \int 
   \log \big\|w(1+it)\big\|_{L_{ {p_1}/2}(\tau)} Q^1_\theta(dt) .$$  
   Letting $\vp\to 0$, we obtain \eqref{riceq2.8},
   and \eqref{riceq2.9} follows, as in the classical case, using the convexity
   of the exponential function. 
     \end{proof}
We will use a certain form of uniform convexity
inequalities for Hardy spaces with values in non-commutative $L_p$-spaces for $0<p\le 1$,
extending  the case $p=1$ which is treated in \cite{HP}. See \cite{DGT} for more
on complex uniform convexity (and in particular Haagerup's inequality
included as Th. 4.3 in \cite{DGT}).  We refer the interested reader to  \cite{TJ}
  for early estimates of the moduli
  of uniform convexity of the Schatten classes $S_p$ for $1<p\le 2$. 
  See also \cite{BCL} and more recently \cite{RX2} for optimal constants in the
  associated martingale inequalities.
  The next result appears as Th. 3.1 in \cite{X2}. 

\begin{thm}[Q. Xu] Let $0<p\le 2$. There is a constant $ \delta_p>0$
such that for any function $F\in H_p(D;L_p(\tau))$
we have 
\begin{equation}\label{riceq2.1}\big\|F(0)\big\|^2_{L_p(\tau)}
+\delta_p \big\|F-F(0)\big\|^2_{H_p(D;L_p(\tau))}\le \big\|F\big\|^2_{H_p(D;L_p(\tau))}.\end{equation}
  \end{thm}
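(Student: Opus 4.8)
The plan is to read \eqref{riceq2.1} as the assertion that $L_p(\tau)$ is \emph{$2$-uniformly PL-convex}, to take the range $1\le p\le 2$ as the (classical) starting point, and then to push $p$ down to $0$ by a ``halving'' induction built on the operator-valued Szeg\H{o} factorization already exploited in the proof of Lemma~\ref{cpp}. First I would make the standard soft reductions. Since $F\mapsto\|F(0)\|_p$, $F\mapsto\|F-F(0)\|_{H_p}$ and $F\mapsto\|F\|_{H_p}$ are continuous on $H_p(D;L_p(\tau))$, it suffices to treat $F$ an analytic polynomial with bounded-operator coefficients; and, cutting $F$ down to $qF(\cdot)q$ for a projection $q\in M$ with $\tau(q)<\infty$ and letting $q\uparrow1$, one reduces to $\tau$ finite, so that the operator-valued Szeg\H{o} theorem of \cite[Th.~8.1]{PX} applies. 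For $1\le p\le 2$ the inequality is classical: $p=2$ is Parseval (so $\delta_2=1$), $p=1$ is \cite{HP}, and $1<p<2$ follows from the $2$-uniform convexity of $L_p(\tau)$ (the non-commutative Clarkson inequalities), hence from its $2$-uniform PL-convexity, which is precisely \eqref{riceq2.1} for that $p$ (cf.\ \cite{DGT} for the equivalence of the various formulations of complex uniform convexity).

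The crux is then the claim that \eqref{riceq2.1} for some $p\in(0,2]$ implies \eqref{riceq2.1} for $p/2$. Given a nice $F\in H_{p/2}(D;L_{p/2}(\tau))$, factor $F=GH$ exactly as in the proof of Lemma~\ref{cpp}, with $G,H$ analytic and $L_p(\tau)$-valued and, on the boundary, $GG^*\le|F^*|$ and $H^*H=|F|+\vp1$. Then $\|G(\zeta)\|_p^2=\|G(\zeta)G(\zeta)^*\|_{p/2}\le\||F(\zeta)^*|\|_{p/2}=\|F(\zeta)\|_{p/2}$, while $\|H(\zeta)\|_p^2=\||F(\zeta)|+\vp1\|_{p/2}\to\|F(\zeta)\|_{p/2}$ as $\vp\to0$; since also $\|F(\zeta)\|_{p/2}=\|G(\zeta)H(\zeta)\|_{p/2}\le\|G(\zeta)\|_p\|H(\zeta)\|_p$ by H\"older, letting $\vp\to0$ squeezes $\|G(\zeta)\|_p^2,\|H(\zeta)\|_p^2\to\|F(\zeta)\|_{p/2}$ a.e.\ on the boundary, and hence $\|G\|_{H_p}^2,\|H\|_{H_p}^2\to\|F\|_{H_{p/2}}$.

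Now use $F(0)=G(0)H(0)$ and $F-F(0)=(G-G(0))H+G(0)(H-H(0))$. By H\"older ($\|xy\|_{p/2}\le\|x\|_p\|y\|_p$), Cauchy--Schwarz on the boundary circle, $\|G(0)\|_p\le\|G\|_{H_p}$ (subharmonicity of $\|G(\cdot)\|_p^p$), and the quasi-triangle inequality \eqref{tri}, one gets $\|F(0)\|_{p/2}\le\|G(0)\|_p\|H(0)\|_p$ and, in the limit $\vp\to0$, $\|F-F(0)\|_{H_{p/2}}^2\le K_p\,\|F\|_{H_{p/2}}(a+b)$ where $a=\|G-G(0)\|_{H_p}^2$, $b=\|H-H(0)\|_{H_p}^2$. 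Applying \eqref{riceq2.1} for $p$ to $G$ and to $H$ gives $\|G(0)\|_p^2\le\|F\|_{H_{p/2}}-\delta_pa$ and $\|H(0)\|_p^2\le\|F\|_{H_{p/2}}-\delta_pb$, whence
\[\|F(0)\|_{p/2}^2\le(\|F\|_{H_{p/2}}-\delta_pa)(\|F\|_{H_{p/2}}-\delta_pb)\le\|F\|_{H_{p/2}}^2-\delta_p\|F\|_{H_{p/2}}(a+b)+\delta_p^2ab.\]
Since $ab\le(a+b)\max(a,b)$ and $\max(a,b)\le4\chi_p^2\|F\|_{H_{p/2}}$ (again from \eqref{tri} and $\|G(0)\|_p\le\|G\|_{H_p}$), after replacing $\delta_p$ by $\min\{\delta_p,(8\chi_p^2)^{-1}\}$ the quadratic term is absorbed, leaving $\|F(0)\|_{p/2}^2\le\|F\|_{H_{p/2}}^2-\tfrac12\delta_p\|F\|_{H_{p/2}}(a+b)$; combined with the bound on $\|F-F(0)\|_{H_{p/2}}^2$ this is \eqref{riceq2.1} for $p/2$ with $\delta_{p/2}=\delta_p/(2K_p)>0$.

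Iterating this claim from $[1,2]$ covers every $p$ in $[1,2]\cup[\tfrac12,1]\cup[\tfrac14,\tfrac12]\cup\cdots=(0,2]$, which is the theorem. The main obstacle is the halving step: one must track carefully how the \emph{affine} data $\bigl(F(0),\,F-F(0)\bigr)$ transforms under the \emph{multiplicative} factorization $F=GH$, control all the error terms and the $\vp\to0$ passage, and check that the operator-valued Szeg\H{o} factorization and the elementary $H_p(D;L_p(\tau))$-theory survive for $p<1$ — the last point being legitimate by \cite{PX}, just as in the analogous step of the proof of Lemma~\ref{cpp}. (One could instead first reduce \eqref{riceq2.1} to the pointwise inequality $\|a\|_p^2+\delta_p\|b\|_p^2\le\frac{1}{2\pi}\int_0^{2\pi}\|a+e^{i\theta}b\|_p^2\,d\theta$ and run the halving there, but the Hardy-space form is the one on which the Szeg\H{o} factorization acts most directly.)
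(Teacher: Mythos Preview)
The paper does not give its own proof of this theorem: it is simply quoted as ``Th.~3.1 in \cite{X2}'' and used as a black box. So there is no ``paper's proof'' to compare against.

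Your proposal is nevertheless a coherent and essentially correct strategy, and it is precisely the non-commutative analogue of the classical route (base case $1\le p\le 2$ from $2$-uniform convexity, then a Szeg\H{o}-type halving to reach $0<p<1$). A few comments. First, the passage from the pointwise $2$-uniform PL-convexity inequality $\|a\|_p^2+\delta_p\|b\|_p^2\le\frac{1}{2\pi}\int_0^{2\pi}\|a+e^{i\theta}b\|_p^2\,d\theta$ to the full Hardy-space inequality \eqref{riceq2.1} is not completely immediate; it is indeed covered by \cite{DGT} (and for $p=1$ by \cite{HP}), but you should flag which statement there you are invoking. Second, in the halving step your limiting argument is fine but needs one line of care: $G,H,a,b$ all depend on $\vp$, while $\|F(0)\|_{p/2}$ and $\|F-F(0)\|_{H_{p/2}}$ do not; since $a,b$ are bounded (by $4\chi_p^2\|F\|_{H_{p/2}}$, as you observe) you may pass to a subsequence on which $a\to a_0$, $b\to b_0$, and then both of your key inequalities survive in the limit with the \emph{same} $a_0,b_0$, which is what you need. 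Third, the soft reductions (polynomials, finite $\tau$) and the use of the operator-valued Szeg\H{o} theorem exactly parallel the paper's own proof of Lemma~\ref{cpp}, so there is no new obstacle there.

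In short: your approach is sound and self-contained, and in fact mirrors the halving device the paper uses for Lemma~\ref{cpp}; it is very likely close in spirit to Xu's original argument in \cite{X2}, to which the paper defers.
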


    \begin{rem}
  A similar inequality holds
  for the values $F(\zeta)$ of $F$ at another point $\zeta$ of $D$.
  Indeed, using a M\"obius map as conformal equivalence
  taking $0$ to $\zeta$, we find
  \begin{equation}\label{riceq2.2}\big\|F(\zeta)\big\|^2_{L_p(\tau)}
+\delta_p \big\|F-F(\zeta)\big\|^2_{L_p(\mu^{\zeta};L_p(\tau))}\le \big\|F\big\|^2_{L_p(\mu^{\zeta};L_p(\tau))},\end{equation}
where $\mu^{\zeta}$ denotes the Poisson probability (harmonic) measure
on $\partial D$ associated to $\zeta\in D$.
  \end{rem}
 
  \begin{rem} By conformal equivalence,
  a similar inequality holds
  with the unit strip $$U=\{z\in \C\mid 0<\Re(z)<1\}$$  in place of  the unit disc $D$.
 For any  $0<\theta<1$, recall that  $\P_U^\theta$ denotes the probability  measure
which is the Poisson or harmonic measure
associated to $\theta$ with respect to the strip $U$. Then
\eqref{riceq2.2} becomes
   \begin{equation}\label{riceq2.3}\big\|F(\theta)\big\|^2_{L_p(\tau)}
+\delta_p \big\|F-F(\theta)\big\|^2_{L_p(\P_U^{\theta};L_p(\tau))}\le \big\|F\big\|^2_{L_p(\P_U^{\theta};L_p(\tau))}.\end{equation}
  
  \end{rem}
  
  \begin{proof}[Proof of Theorem \ref{nho}]
  First we reduce the situation where $x=x^*$, indeed let us assume the result hold in this situation and consider the $2\times 2$-matrices
\begin{equation}\label{trick}
\tilde W =\left[\begin{array}{cc} V^* & 0 \\ 0 &
    W\end{array}\right], \tilde V =\left[\begin{array}{cc} V & 0
    \\ 0 & W^*\end{array}\right], \tilde f =\frac 1 2
\left[\begin{array}{cc} f & 0 \\ 0 & f\end{array}\right], \tilde x
=\left[\begin{array}{cc} 0 & x \\ x^* &0 \end{array}\right].
\end{equation} Those
elements satisfy the assumptions in $M_2(M)$ and $\tilde x$ is
selfadjoint. But one has
$$\big\|\tilde x\tilde W \tilde f^{\alpha(1-\theta)}+\tilde V \tilde f^{\alpha(1-\theta)}
\tilde x\big\|_q=2^{\frac 1 q-1} \big\| x W  f^{\alpha(1-\theta)}+ V  f^{\alpha(1-\theta)}
 x\big\|_q,$$$$
\big\|\tilde x\tilde W \tilde f^{\alpha}+\tilde V \tilde f^{\alpha}
\tilde x\big\|_p=2^{\frac 1 p-1} \big\| x W  f^{\alpha}+ V  f^{\alpha}
 x\big\|_p.$$
So that \eqref{riceq2.0} for $x, \,f,\, V,\,W$ follows from that of 
 $\tilde x, \,\tilde f,\, \tilde V,\,\tilde W$.

Next we reduce the proof to finite von Neumann algebras. To see that,
 assume the result is true for finite von Neumann algebras. 
 Let $p_n=1_{(\frac 1 n,\infty)}(f)$. Note that $p_n$ commutes with $f$ and $V$. We have
    $p_n\to p_\infty=1_{(0,\infty)}(f)$ in $M$ for the strong operator topology. This implies that
    for any $t<\infty$ and $y\in L_t(\tau)$ $\|p_nyp_n -p_\infty y p_\infty\|_{t}\to 0$ (and 
$\|p_nyp_n\|_\infty\leq \|y\|_\infty$ if $y\in M$).
As $p_nMp_n$ is finite, we can apply the result to $p_nxp_n$, $fp_n$ and $Vp_n$ and let $n\to \infty$. Note that $f(1-p_\infty)=(1-p_\infty)f=0$. There is a remaining term
of the form $(1-p_\infty) x Wf^{\a(1-\theta)}p_\infty +p_\infty V f^{\a(1-\theta)}  x (1-p_\infty) $ which is easy to handle, it splits in two terms
that can be treated using basic one sided estimates.

Next we reduce the proof to the technically easier case when $f$
  has a finite spectrum. Fix $\vp'>0$. Let $g\in L_1^+$
  with finite spectrum such that $\|g-f\|_1<\vp'$. We can find such a $g$ 
  by approximating the spectral decomposition of $f$ so that
  spectral decomposition of
  $g$ commutes with $f$.  
  Then, for any $\beta>0$, we have clearly
  a bound $\|g^\beta-f^\beta\|_{\frac 1\beta}\le o(\vp')$, from which it follows,
  by H\"older, that for any $y\in L_s(\tau)$ we have
  $\|y(g^\beta-f^\beta)W+V(g^\beta-f^\beta)y\|_{  s +\frac 1\beta}\le o(\vp')$.
  A fortiori, we have
  $$\left|\|yg^\beta W +V g^\beta y\|_{  s +\frac 1\beta} - \|y Wf^\beta+V f^\beta y\|_{  s +\frac 1\beta} \right |\le o(\vp).$$
  From this last inequality it becomes clear
  that we may reduce the proof of \eqref{riceq2.0}
  to the case when $f$
  has a finite spectrum and $x=x^*$, so we assume this in the rest of the proof.

  Fix $1<\gamma$. We will apply  \eqref{riceq2.3} to the analytic function
  $F:\ U\to  L_R(\tau)$
  defined by $$F(\zeta)= f^{\gamma\a \zeta}  x  f^{\gamma\a(1-\zeta)  },$$
where $\frac 1R = \frac 1 s + \a \gamma$ 
and we replace $\theta$ in  \eqref{riceq2.3}  by $ \frac 1 \gamma$.
 We have $$F\big(\frac 1\gamma\big)= f^{ \a }  x  f^{ \a  {(\gamma-1)} } .$$
  Note that $\frac 1R = \frac 1 p + \a (\gamma-1)$ so that 
by H\"older, multiplication by $f^{  \a (\gamma-1) }$ (left or right)  is of norm 1
from $L_p(\tau)$ to $L_R(\tau)$.
As $x=x^*$, the right hand side of  \eqref{riceq2.3} is exactly 
$\|xf^{\a \gamma}\|_R^2=\|f^{\a \gamma} x\|_R^2$. The $R$-triangle inequality gives (note that $R<p\leq 1$):
$$\|x f^{\a \gamma} \|_R^R=\|x f^{\a\gamma} W\|_R^R \leq 
\big\| F\big(\frac 1\gamma\big)\big\|_R^R  +\big\|(x W f^\a +Vf^\a x)f^{\a(\gamma-1)} \big\|_R^R\leq \big\| F\big(\frac 1\gamma\big)\big\|_R^R  +\big\|x W f^\a +Vf^\a x \big\|_p^R. $$
Let us assume for the moment that $\big\|x W f^\a +Vf^\a x \|_p\leq \| xf^{\a \gamma}\|_R$, so that by convexity of $t\mapsto t^{\frac 2R}$:
$$\big\| F\big(\frac 1\gamma\big)\big\|_R^2\geq  \big\|x f^{\a\gamma} \big\|_R^{2}- \frac 2 R \big\|x f^{\a\gamma} \big\|_R^{2-R} \big\|x W f^\a +Vf^\a x \big\|_p^R.$$
Thus, we get
 $$\big\|F\big\|^2_{L_R(\P_U^{\frac 1 \gamma};L_R(\tau))}-\big\|F\big(\frac 1 \gamma\big)\big\|^2_{L_R(\tau)}\leq \frac 2 R \big\|x f^{\a\gamma} \big\|_R^{2-R} \big\|x W f^\a +Vf^\a x \big\|_p^R.$$ 
  Let $u_t= f^{\gamma \a it}$.
  By   \eqref{riceq2.3}, we have (a fortiori)
  $$ \delta_R\left(\frac 1 {\gamma}\int \big\| f^{ \a }  x  f^{ \a (\gamma-1) }- u_t  f ^{\gamma \a} x u_t^*    \big\|^R_R Q_{\frac 1\gamma}^1(dt)  \right)^{\frac 2R}\leq 
 \frac 2 R \big\|x f^{\a\gamma} \big\|_R^{2-R} \big\|x W f^\a +Vf^\a x \big\|_p^R.$$
Put $H(t)=u_t  f ^{\gamma \a} x-f^{ \a }  x  f^{ \a (\gamma-1) }u_t$, so that under the assumption $\big\|x W f^\a +Vf^\a x \|_p\leq \| xf^{\a \gamma}\|_R$:
\begin{equation}\label{eqh}  \int \| H(t) \|^R_R Q_{\frac 1\gamma}^1(dt) \leq C_R 
  \big\|x W f^\a +Vf^\a x \big\|_p^{\frac {R^2}2} \big\|x\big\|_s^{R- \frac {R^2}2}.\end{equation}
We now introduce 
   the analytic function $G:\ U\to L_R(\tau)$ defined by
   $$G(z)=  Vf ^{ \gamma \a (1-z)} x+   x Wf ^{ \gamma\a (1-z)}.$$
We  apply \eqref{riceq2.9} at the point $\omega=\frac{\gamma +\theta-1}{\gamma} \in U$ with $p_1=s$ and $p_0=R$. Easy computations give
$$1-\omega= \frac {1-\theta}\gamma,\quad \frac 1{p_\omega} =\frac {1-\omega}R+\frac \omega s=\frac 1 q.$$
  Note that for any $t\in \R$ we have (recall  $\chi_p=\max\{2^{\frac{1}{p}-1},1\}$)
  $$\|G(1+it)\|_s\le 2\chi_s\|x\|_s.$$
For $t\in \R$, we also have
\begin{equation}\label{eq13} G(it)=Vu_{-t}f^{\gamma \alpha }x+xf^{\gamma \alpha}Wu_{-t}=VH(-t) +
(V f^\a x + xf^\a W)f^{\a (\gamma -1)}u_{-t}.\end{equation}
We want to estimate $\int \|G(it)\|_R^R Q^0_\omega(dt)$ in order to be able to apply
\eqref{riceq2.9}.\\
We will distinguish between two cases: either $\big\|x W f^\a
+Vf^\a x \big\|_p$ is larger than $\| xf^{\a \gamma}\|_R$ or not.\\
 If $\big\|x W f^\a
+Vf^\a x \big\|_p> \| xf^{\a \gamma}\|_R=\| f^{\a \gamma}x\|_R$, then
clearly (by the $R$-triangle inequality)
\begin{equation}\label{eq14} \int \big\|G(it)\big\|_R^R Q^0_\omega(dt) \leq 2\big\|x W f^\a +Vf^\a x\big\|_p^R. \end{equation}
Otherwise with the H\"older inequality, the $R$-triangle inequality and \eqref{eq13}:
    $$\int \big\|G(it)\big\|_R^R Q^0_\omega(dt)\le \int \big\|H(-t)\big\|_R^R
Q^0_\omega(dt) + \big\|Vf^\a x + x f^\a W \big\|_p^R.$$ 
But, by the explicit
formula for the Poisson kernels   on the strip (see \cite{BL}
page 93), namely
$$Q^k_{\omega}(t)= \frac {\sin{\pi \omega}}{2\big(\cosh(\pi t)- (-1)^k\cos(\pi\omega)\big)}, \quad (k=0,1, \ 0<\omega<1)$$
we know that $Q^1_{\frac 1 \gamma}(t)dt$ and  $Q^0_{\omega}(t)dt$ are equivalent symmetric measures. \\ Thus, in the second case, by \eqref{eqh}  we can find some constant $M_\gamma$ with 
$$\int \big\|G(it)\big\|_R^R Q^0_\omega(dt)\le M_\gamma \big\|x W f^\a +Vf^\a x \big\|_p^{\frac {R^2}2}\big\|x\big\|_s^{R- \frac {R^2}2}.$$
Since $ \|x W f^\a +Vf^\a x \big\|_p  \le \big\|x\big\|_s$, by \eqref{eq14} a similar bound also holds in the first case (and hence in both cases).
By \eqref{riceq2.9}  there is a constant $C_\gamma$ depending on 
$\gamma$ (and $p,\,q$ and $s$) so that:
\begin{eqnarray*}
\big\| Vf^{\alpha(1-\theta)}x+ xWf^{\alpha(1-\theta)}\big\|_q & \leq& C_\gamma 
\big\|x W f^\a +Vf^\a x \big\|_p^{\frac {R(1-\omega)}2} \big\|x\big\|_s^{(1-\frac R2)(1-\omega)+\omega}\\ & = &C_\gamma \big\|x W f^\a +Vf^\a x \big\|_p^{\frac {R(1-\theta)}{2\gamma}} \big\|x\big\|_s^{
1-\frac {R(1-\theta)}{2\gamma}}.
\end{eqnarray*}
The theorem is obtained by letting $\gamma\to 1$ (note that $C_\gamma\to \infty$).
  \end{proof}
  
\begin{rem}
 In Theorem \ref{nho} with the same notation, one can also get for $x\in L_s(\tau)$ with  $f,g\in L_1(\tau)^+$ of norm 1 and $V$  a unitary commuting with $f$ and $W$ a unitary commuting with $g$
\[\big\|x g^{\a(1-\theta)}W + Vf^{\a(1-\theta)}  x\big\|_q\le C\big\| x g^{\a }W+ Vf^{\a }  x   \big\|_p^{\frac{R}{2}(1-\theta)} \big\|x\big\|_s^{1-\frac{R}{2}(1-\theta)}.\]  
To see it, one can use the $2\times 2$ trick of \eqref{trick} but with  
$\tilde f =\frac 1 2
\left[\begin{array}{cc} f & 0 \\ 0 & g\end{array}\right]$.
\end{rem}  
    
\begin{rem}
 One can now deduce an estimate weaker than \eqref{sup5ter}, with the notation of
 Theorem \ref{nho}:
$$\max\big\{ \big\| f^\a x\big\|_p,\big\|xf^\a\big\|_p\big\} \lesssim \big\|f^\a x+xf^\a\big\|_p^{\frac R4}\big\|x\big\|_s^{1-\frac R 4}.$$  
This follows from the identity $2f^\a x= f^ax+xf^\a+f^{\frac \a 2}\big(f^{\frac \a 2}x+xf^{\frac \a 2}\big) -\big(f^{\frac \a 2}x+xf^{\frac \a 2}\big)f^{\frac \a 2}$ and Theorem \ref{nho} with $\theta =\frac 12$.
\end{rem} 
\section{ Maurey's factorization of operators with values in $L_p$, $0<p<1$ }\label{sec4}
In this section we attempt to give a non-commutative version of the following result
of Maurey \cite{Ma2}: Let $(\Omega,\mu)$ be any measure space and let $0<p< 1$.
For any bounded linear operator $u: H\to L_p(\mu)$ with $\|u\|\le 1$
there is a probability density $f$ such that (with the convention $\frac{0}{0}=0$)
$$\forall x\in H\quad \left(\int \Big|\frac {u(x)}{f^\alpha}\Big|^2 d\mu \right)^{\frac 12}\le C \|x\|,$$
where $C$ is a constant depending only on $p$ and where, as before, $\alpha=\frac1p-\frac 12$.
We refer to \cite{LP2,LPX,JuP} for results related to this section.

We will use the following well known variant of the Hahn-Banach theorem
(see e.g. \cite[p.39 and p.421]{P2}).
             
       \begin{lem}\label{hb}    Let $S$ be a set and
let ${\cal F}\subset
\ell_\infty(S)$ be a convex cone of real valued functions on $S$ such that
$$\forall\ f \in {\cal F}\qquad \sup_{s\in S} f(s) \ge 0.$$
Then there is a net $(\lambda_i)$ of finitely supported probability
measures on $S$ such that
$$\forall\ f\in {\cal F}\qquad \lim_{\cl U} \int fd\lambda_i \ge 0$$
for any ultrafilter $ {\cl U} $ refining the net.
\end{lem}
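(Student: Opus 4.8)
The plan is to reduce this infinite statement to a finite-dimensional separation argument and then build the net by a routine directedness construction. The core is the following finite version: given $f_1,\dots,f_n\in{\cal F}$ and $\vp>0$, there is a finitely supported probability measure $\lambda$ on $S$ with $\int f_j\,d\lambda>-\vp$ for every $j$. I would prove this by contradiction. Put $\Phi(s)=(f_1(s),\dots,f_n(s))\in\R^n$; its range is bounded since the $f_j$ lie in $\ell_\infty(S)$, so $C=\mathrm{conv}\,\Phi(S)$ is a bounded convex set, and the vectors $(\int f_j\,d\lambda)_j$ arising from finitely supported probability measures $\lambda$ are exactly the points of $C$. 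If the claim failed, $C$ would be disjoint from the open convex set $U_\vp=(-\vp,\infty)^n$, so the Hahn--Banach separation theorem in $\R^n$ would yield a nonzero $\phi=(\phi_1,\dots,\phi_n)$ and a scalar $\beta$ with $\phi\cdot y\le\beta$ for $y\in C$ and $\phi\cdot z\ge\beta$ for $z\in U_\vp$.

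The point I expect to need the most care --- though it is ultimately modest --- is reading off the geometry of $U_\vp$: since $U_\vp$ is stable under adding any vector with nonnegative entries, the bound $\phi\cdot z\ge\beta$ on $U_\vp$ is impossible unless $\phi_j\ge0$ for all $j$. Normalizing $\sum_j\phi_j=1$ makes $\phi$ a probability vector, and then $\inf_{z\in U_\vp}\phi\cdot z=-\vp$, forcing $\beta\le-\vp$ and hence $\sum_j\phi_j f_j(s)=\phi\cdot\Phi(s)\le-\vp$ for every $s\in S$. But $g:=\sum_j\phi_j f_j$ lies in ${\cal F}$ (a convex cone is stable under convex combinations), so the hypothesis gives $\sup_s g(s)\ge0$, contradicting $g\le-\vp$. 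This proves the finite case.

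To finish, I would take as index set the directed poset $I$ of pairs $i=(F,\vp)$ with $F\subset{\cal F}$ finite and $\vp>0$, where $(F,\vp)\le(F',\vp')$ means $F\subseteq F'$ and $\vp\ge\vp'$, and for each $i=(F,\vp)$ choose $\lambda_i$ supplied by the finite case. Then for any fixed $f\in{\cal F}$ and any $\vp>0$ the set $\{\,i:\int f\,d\lambda_i>-\vp\,\}$ contains the tail of the net past $(\{f\},\vp)$, hence belongs to every ultrafilter ${\cal U}$ refining the net; therefore $\lim_{\cal U}\int f\,d\lambda_i\ge-\vp$, and letting $\vp\downarrow0$ gives $\lim_{\cal U}\int f\,d\lambda_i\ge0$. (Alternatively, the finite case shows that the weak$^*$-closed sets $\{m:m(f)\ge0\}$, $f\in{\cal F}$, inside the weak$^*$-compact space of means on $S$ have the finite intersection property, so their intersection contains a mean $m_0$ that is $\ge0$ on ${\cal F}$; any net of finitely supported probability measures converging weak$^*$ to $m_0$ then does the job.)
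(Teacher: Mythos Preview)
Your argument is correct. The paper does not supply a proof of this lemma at all; it simply states the result as well known and points to \cite[p.~39 and p.~421]{P2}. The standard argument found there is precisely the finite-dimensional Hahn--Banach separation you carry out (reduce to finitely many $f_j$, separate the convex hull of the range of $\Phi$ from a shifted positive orthant, read off a convex combination $g=\sum\phi_jf_j\in{\cal F}$ with $\sup g<0$, contradiction), followed by the same directed-set packaging. So your proposal matches the intended proof essentially verbatim; there is nothing to add or compare.
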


It will be convenient to use the following notation:

Let $f$ be any element in $(M_*)_+$, and as before $\alpha=\frac1p-\frac12$.
We denote by $J(f)$ ($J$ for Jordan) the mapping
 $x\mapsto \frac{fx+xf} 2$.
We will denote by $L(f)$ and $R(f)$ the left and right multiplications by $f$, 
 so that  $J(f)=\frac{L(f)+R(f)}2$. 
 We use the same notation for $$J(f^\alpha)(x)=\frac{f^{\alpha} x + x  f^{\alpha}}2 $$
 for any $\alpha>0$. Note that
 since $f^\alpha\in L_{\frac 1\alpha}(\tau)$ the latter mapping is bounded
 from $L_2(\tau)$ to $L_p(\tau)$ when $\alpha=\frac 1 p-\frac 12$.
 Moreover, this mapping preserves self-adjointness.\\
In addition, if $f$ has full support (\ie if $f$ is a faithful
 normal state on $M$) then the mapping $J(f^\alpha): L_2(\tau)\to L_p(\tau)$
is injective. We essentially already observed
this (see   Remark \ref{ultra}). Indeed, if $J(f^\alpha)(x)=0$ for $x\in  L_2(\tau)$
then the same is true for the real and imaginary parts  of $x$,
so we may assume $x=x^*$. Then $J(f^\alpha)(x)=0$ implies
$f^{2\alpha}x=x f^{2\alpha}$ and hence,
since $x$ is self-adjoint, $f^{\alpha}x=x f^{\alpha}$. But then
$J(f^\alpha)(x)=f^{\alpha}x$, and since $f$ has full support,
$f^{\alpha}x=0$ implies $x=0$.

In this section, we will assume 
 that
$M$ is $\sigma$-finite. Then (see \cite[p. 78]{Tak1}) there is a faithful
normal state $f_0$ on $M$.   It will be convenient to invoke the following elementary Lemma.

\begin{lem}\label{div}
Let $f\in {\cl D}$ and let $g= (f^{2\alpha}+f_0^{2\alpha})^{\frac 1{2\alpha}}$.
Then $g$ is faithful and $\tau(g)\le 2$. Moreover
$$J(f^\alpha) (L_2(\tau)) \subset J(g^\alpha) (L_2(\tau))$$
and for any $y\in L_2(\tau)$ we have
 \begin{equation}\label{mau1}\big\|J(g^\alpha)^{-1} J(f^\alpha) y\big\|_2\le 2\big\|y\big\|_2.\end{equation}
\end{lem}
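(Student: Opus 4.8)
The plan is to verify the four assertions in turn. The first two are quick: since $f^{2\alpha}\ge 0$ one has $g^{2\alpha}=f^{2\alpha}+f_0^{2\alpha}\ge f_0^{2\alpha}$, so the support projection of $g^{2\alpha}$ dominates that of $f_0^{2\alpha}$, which is $1$ because $f_0$ is a faithful normal state; as $g$ has the same support as $g^{2\alpha}$, $g$ is faithful. For $\tau(g)\le 2$, set $r=\tfrac1{2\alpha}=\tfrac{p}{2-p}$, which lies in $(0,1)$ since $0<p<1$; then $f^{2\alpha},f_0^{2\alpha}\in L_r(\tau)_+$ with $\|f^{2\alpha}\|_r^r=\tau(f)=1$ and $\|f_0^{2\alpha}\|_r^r=\tau(f_0)=1$, so the $r$-triangle inequality \eqref{tri2} in $L_r(\tau)$ gives $\tau(g)=\|g^{2\alpha}\|_r^r=\|f^{2\alpha}+f_0^{2\alpha}\|_r^r\le\|f^{2\alpha}\|_r^r+\|f_0^{2\alpha}\|_r^r=2$.

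For the inclusion $J(f^\alpha)(L_2(\tau))\subset J(g^\alpha)(L_2(\tau))$ and for \eqref{mau1} I would work on the Hilbert space $L_2(\tau)$, where $L(g^\alpha),R(g^\alpha)$ (and likewise $L(f^\alpha),R(f^\alpha)$) are commuting positive selfadjoint operators, so $J(g^\alpha)=\tfrac12(L(g^\alpha)+R(g^\alpha))$ and $J(f^\alpha)$ are positive selfadjoint operators on $L_2(\tau)$, with $J(g^\alpha)$ injective because $g$ is faithful (as already observed before the statement). The crux is the operator inequality $J(f^\alpha)^2\le J(g^\alpha)^2$. To get it, note that $f^{2\alpha}\le g^{2\alpha}$, hence $f^\alpha\le g^\alpha$ by the L\"owner--Heinz operator monotonicity of $t\mapsto t^{1/2}$; put $d=g^\alpha-f^\alpha\ge 0$. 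Since these pairwise commuting positive operators satisfy $(L(h)+R(h))^2=L(h^2)+2L(h)R(h)+R(h^2)$, it suffices, for $\xi$ in a suitable core for $J(g^\alpha)$, to compare three quadratic forms: $\tau(\xi^*f^{2\alpha}\xi)\le\tau(\xi^*g^{2\alpha}\xi)$ and $\tau(f^{2\alpha}\xi^*\xi)\le\tau(g^{2\alpha}\xi^*\xi)$ are immediate from $f^{2\alpha}\le g^{2\alpha}$ and positivity of $\tau$, while for the cross term one writes $\tau(\xi^*g^\alpha\xi g^\alpha)=\tau(\xi^*f^\alpha\xi f^\alpha)+\tau(\xi^*f^\alpha\xi d)+\tau(\xi^*d\xi f^\alpha)+\tau(\xi^*d\xi d)$ and uses the elementary identity $\tau(\xi^*a\xi b)=\|a^{1/2}\xi b^{1/2}\|_2^2\ge 0$, valid for $a,b\ge 0$, to conclude that the last three terms are nonnegative, so $\tau(\xi^*f^\alpha\xi f^\alpha)\le\tau(\xi^*g^\alpha\xi g^\alpha)$. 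Adding the three comparisons yields $\|J(f^\alpha)\xi\|_2\le\|J(g^\alpha)\xi\|_2$ on the core, hence on all of $\mathrm{dom}\,J(g^\alpha)$ by closedness.

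Once $\|J(f^\alpha)\xi\|_2\le\|J(g^\alpha)\xi\|_2$ is known, the map $J(g^\alpha)\xi\mapsto J(f^\alpha)\xi$ is a well-defined contraction on the range of $J(g^\alpha)$, which is dense because $J(g^\alpha)$ is injective and selfadjoint; it extends to a contraction $D$ on $L_2(\tau)$ with $DJ(g^\alpha)\subset J(f^\alpha)$, and taking adjoints (using that $D$ is bounded and $J(f^\alpha),J(g^\alpha)$ are selfadjoint) gives $J(f^\alpha)\subset J(g^\alpha)D^*$. Thus the two bounded maps $L_2(\tau)\to L_p(\tau)$ given by $y\mapsto J(f^\alpha)y$ and $y\mapsto J(g^\alpha)(D^*y)$ agree on the dense subspace $\mathrm{dom}\,J(f^\alpha)$, hence everywhere; so $J(f^\alpha)y=J(g^\alpha)(D^*y)\in J(g^\alpha)(L_2(\tau))$, which is the asserted inclusion, and since $J(g^\alpha)$ is injective, $J(g^\alpha)^{-1}J(f^\alpha)y=D^*y$, whence $\|J(g^\alpha)^{-1}J(f^\alpha)y\|_2=\|D^*y\|_2\le\|y\|_2\le 2\|y\|_2$, which is \eqref{mau1} (indeed with the better constant $1$).

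The part I expect to require the most care is the operator inequality $J(f^\alpha)^2\le J(g^\alpha)^2$: beyond the cross-term comparison, which genuinely relies on the positivity of $\tau(\xi^*a\xi b)$ for positive $a,b$ together with $f^\alpha\le g^\alpha$, one must track the domains of the unbounded selfadjoint operators $L(f^\alpha),R(f^\alpha),L(g^\alpha),R(g^\alpha)$ on $L_2(\tau)$ — choosing the core so that all the traces above are finite, extending the quadratic-form inequality to the closure, and justifying the final adjoint identity. If one prefers to bypass unbounded operators, one can first reduce, by the approximation arguments already used in the proof of Theorem \ref{nho} (replacing $f$ and $f_0$ by faithful elements of finite spectrum), to the case where $f^\alpha$ and $g^\alpha$ are bounded and invertible, apply Douglas's factorization lemma for bounded Hilbert space operators, and then pass to the limit.
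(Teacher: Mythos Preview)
Your proof is correct, but it takes a genuinely different route from the paper's. The paper does \emph{not} compare $J(f^\alpha)^2$ with $J(g^\alpha)^2$ directly. Instead it treats left and right multiplication separately: from $f^{2\alpha}\le g^{2\alpha}$ it gets
\[
L(f^\alpha)^2=L(f^{2\alpha})\le L(g^{2\alpha})=L(g^\alpha)^2\le \bigl(L(g^\alpha)+R(g^\alpha)\bigr)^2=4\,J(g^\alpha)^2,
\]
hence $\|J(g^\alpha)^{-1}L(f^\alpha)\|\le 2$, and likewise for $R(f^\alpha)$; averaging and the triangle inequality in $L_2(\tau)$ give \eqref{mau1}. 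This avoids L\"owner--Heinz entirely (only $f^{2\alpha}\le g^{2\alpha}$ is used, never $f^\alpha\le g^\alpha$), sidesteps the cross-term analysis, and as a byproduct yields the stronger inclusion $L(f^\alpha)(L_2(\tau))+R(f^\alpha)(L_2(\tau))\subset J(g^\alpha)(L_2(\tau))$. Your approach, by contrast, buys the sharper constant $1$ in \eqref{mau1}: the cross-term comparison $\tau(\xi^*f^\alpha\xi f^\alpha)\le\tau(\xi^*g^\alpha\xi g^\alpha)$, which genuinely needs $f^\alpha\le g^\alpha$, is exactly what lets you conclude $J(f^\alpha)^2\le J(g^\alpha)^2$ rather than $J(f^\alpha)^2\le 4J(g^\alpha)^2$. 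The price is the extra operator-monotonicity step and the Douglas-type adjoint argument, plus the domain bookkeeping you flag at the end; the paper's version is shorter and more robust, yours is sharper.
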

\begin{proof} Since $g\ge f_0 $, $g$ is faithful. Let $s=\frac 1{2\alpha}$.
Note $s\le 1$. By the $s$-triangle inequality (see \eqref{tri2}), we have
 $\tau(g)\le \tau(f)+\tau(f_0)\le 2$ .
 We will prove more generally that
$$L( f^\alpha) (L_2(\tau) )+ R( f^\alpha) (L_2(\tau))\subset J(g^\alpha)( L_2(\tau)).$$
Note that  $f^{2\alpha} \le g^{2\alpha}$. Therefore, as unbounded operators on $L_2(\tau)$ we have
$$L(f^{2\alpha}) =(L(f^{\alpha}))^2 \le  (L(f^{\alpha}))^2 + 2 L(f^{\alpha})R(f^{\alpha}) +(R(f^{\alpha}))^2=  4 (J(g^{\alpha}))^2 .$$
This implies $$\|(J(g^{\alpha}))^{-1} L(f^{\alpha})  \|_{  L_2(\tau)\to L_2(\tau)} \le   2.$$
Thus if $x=f^\alpha y$ with $y\in L_2(\tau)$, we have $x=J(g^{\alpha})(y')$ with
$y'=(J(g^{\alpha}))^{-1} L(f^{\alpha})(y)\in  L_2(\tau)$ and $\|y'\|_2\le 2 \|y\|_2$.
A similar result holds for the right hand side multiplication.
This proves the announced inclusion and \eqref{mau1} follows by the triangle inequality
in $L_2(\tau)$.
\end{proof}

We will denote by ${\cl D}'$ the subset of ${\cl D}$ formed of
the elements with full support. Assuming $f\in {\cl D}'$  and $ T\in L_p(\tau)$,
we denote\\
\centerline{$D_{f^\alpha}(T)=\|(J(f^\alpha))^{-1} (T)\|_2$  \text{
if } $T\in J(f^\alpha)(L_2(\tau))$ }
\text{ and we set }
  {$D_{f^\alpha}(T)=\infty$}   \text{ if } $T$ \text{ is not in this range.}

\begin{thm}\label{gric} Let $0<p<1$.  There is a constant $C$ for which the following holds:
Let $u:\ H\to L_p(\tau)$ be a bounded operator with $\|u\|\le 1$.
Then there is a net of
 finitely supported  probability measures $(\lambda_i)$ on $\cl D'$
 such that for any $x\in H$ we have   
 \begin{equation}\label{ric11} \left(\lim_{\cl U} \int [D_{{f}^\alpha}(u(x))]^2 d\lambda_i(f)\right)^{\frac 12}\le C \|x\|_H,\end{equation}
where $\cl U$ is any   ultrafilter refining the net.
\end{thm}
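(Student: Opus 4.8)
The plan is to use the Hahn–Banach separation technique of Lemma \ref{hb} to produce the net $(\lambda_i)$, exactly as in the classical Maurey argument, with the new H\"older-type inequality \eqref{riceq2.0b} of Theorem \ref{nho} supplying the quantitative input that in the commutative case is provided by ordinary H\"older. The set $S$ in Lemma \ref{hb} will be taken to be $\cl D'$ (or $\cl D$ after a limiting argument using Lemma \ref{div} to pass to faithful densities), and the convex cone $\cl F$ will consist of functions of the form
\[
  f\ \longmapsto\ \sum_{j=1}^m \big(\,c\,[D_{f^\alpha}(u(x_j))]^2 - \|x_j\|_H^2\,\big)\wedge N
\]
for finite families $(x_j)$ in $H$, a cutoff $N$, and a suitable constant $c$; one truncates to land in $\ell_\infty(\cl D')$. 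Once one checks that $\cl F$ is a convex cone and that $\sup_{f} (\cdot)\ge 0$ for each of its members, Lemma \ref{hb} delivers the net $(\lambda_i)$ and the inequality \eqref{ric11} for a single $x$ follows by taking $m=1$, then letting $N\to\infty$ via monotone convergence along the ultrafilter.

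**The heart of the matter** is therefore the pointwise positivity condition: for any finite family $x_1,\dots,x_m\in H$ with, say, $\sum\|x_j\|_H^2\le 1$, one must exhibit some $f\in\cl D'$ with $\sum_j [D_{f^\alpha}(u(x_j))]^2 \le C$. Equivalently, writing $T_j=u(x_j)\in L_p(\tau)$ with $|||(T_j)|||_p\le \|u\|\,(\sum\|x_j\|^2)^{1/2}\le 1$ (using that $u$ maps the Hilbert ball into the ``row/column'' structure — here one invokes the Khintchine inequality \eqref{eq0.1}, now available for all $0<p<1$, to control $|||(T_j)|||_p$ by the operator norm of $u$ composed with a Rademacher average, as in the last Corollary of \S\ref{sec2}), one must find a density $f$ and elements $y_j\in L_2(\tau)$ with $T_j=J(f^\alpha)(y_j)$ and $\sum\|y_j\|_2^2\le C^2$. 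This is precisely a ``simultaneous factorization'' statement: it amounts to \emph{Step \ref{stp2}}-type control, i.e.\ $C_2((T_j))\lesssim |||(T_j)|||_p$, combined with the bookkeeping that the infimum defining $C_2$ can be realized (up to $\varepsilon$) by one common density $f$ — which is automatic from the definition of $C_2$ in \eqref{ric2}. So the real content is to run Steps \ref{stp1}--\ref{stp3} in reverse for $p<1$: from $|||x|||_p\lesssim \|S\|_p$ one extracts a factorization $S=J(f^{1/r})(Y)$ with $\|Y\|_2\lesssim |||x|||_p$, which is exactly the converse to Step \ref{stp2} asserted in the excerpt (``the converse inequality also holds'', proved in \cite{P5} for $p\ge1$ and now for all $p>0$ via Theorem \ref{nho}).

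**Concretely I would proceed as follows.** First reduce to $M$ $\sigma$-finite with a faithful state $f_0$, and recall $\cl D'$ is dense in $\cl D$ for the relevant topology. Second, set up $\cl F\subset\ell_\infty(\cl D')$ as above; verifying it is a convex cone is routine (sums of finite families concatenate; positive scalars rescale $c$ and $N$). Third, prove the supremum condition: given $(x_j)$, let $S=\sum \xi_k\otimes u(x_k)$ in the notation of \S\ref{sec1} — here one uses the Khintchine inequality to pass between $\|S\|_p$ and $|||(u(x_j))|||_p$ and the boundedness of $u$ to bound $\|S\|_p \le \|u\|\,(\sum\|x_j\|_H^2)^{1/2}$ after identifying $\sum\xi_k\otimes x_k$ with an element of $H\,\widehat\otimes\,(\text{row space})$ of the right norm; then invoke $C_2((u(x_j)))\le C'' |||(u(x_j))|||_p \le C\|u\|(\sum\|x_j\|^2)^{1/2}$, and read off from the definition of $C_2$ a single $f\in\cl D$ and $y_j\in L_2$ with $u(x_j)=J(f^\alpha)(y_j)$ and $\sum\|y_j\|_2^2 \le C^2$; replace $f$ by $g=(f^{2\alpha}+f_0^{2\alpha})^{1/(2\alpha)}\in\cl D'$ (up to normalizing $\tau(g)\le 2$) using Lemma \ref{div}, which only inflates the constant by a factor $4$ and keeps full support. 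This gives $\sum_j [D_{g^\alpha}(u(x_j))]^2\le 4C^2$, so each element of $\cl F$ (with $c=1/(4C^2)$, any $N$) has nonnegative supremum. Fourth, apply Lemma \ref{hb} to get $(\lambda_i)$ and conclude \eqref{ric11} for each fixed $x$ by truncation and monotone convergence along $\cl U$.

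**The main obstacle** is the converse-to-Step-\ref{stp2} factorization $C_2((T_j))\lesssim |||(T_j)|||_p$ for $0<p<1$: one must check that the extrapolation machinery of \S\ref{sec2} — in particular the H\"older inequality \eqref{riceq2.0b} and Lemma \ref{lem1.6} — not only yields $|||x|||_p\lesssim C_p(x)=\|S\|_p$ but can be unwound to produce an explicit $L_2$-factorization through a \emph{single} density with $L_2$-norm controlled by $|||x|||_p$. For $p\ge1$ this is in \cite{P5}; the point here is that Theorem \ref{nho} removes the only obstruction (Step \ref{stp3}) that failed for $p<1$, so the same derivation goes through verbatim. Everything else — the Hahn–Banach step, the truncation, the passage $\cl D\rightsquigarrow\cl D'$ via Lemma \ref{div} — is soft and identical to Maurey's original argument.
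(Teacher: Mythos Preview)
Your approach is essentially the paper's: Khintchine for $p<1$ gives $|||(u(x_j))|||_p \lesssim (\sum\|x_j\|_H^2)^{1/2}$, then Step~\ref{stp2} combined with Lemma~\ref{div} yields a single $f\in\cl D'$ with $\sum_j D_{f^\alpha}(u(x_j))^2 \lesssim \sum\|x_j\|_H^2$, and Lemma~\ref{hb} produces the net. One clarification: the inequality $C_2(x)\le C''|||x|||_p$ that you call ``converse-to-Step~\ref{stp2}'' \emph{is} Step~\ref{stp2}, and it already holds for all $0<p<2$ in \cite{P5} without Theorem~\ref{nho}; the new H\"older inequality enters only through the Khintchine inequality itself, not through Step~\ref{stp2}.
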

\begin{proof}[Proof of Theorem \ref{gric}] 
For any finite sequence $(x_k)$ in $H$
we have by \eqref{eq0.1}
$$ |||(u(x_k)) |||_p 
\le \beta_p \left(\int \big\|\sum r_k(t) u(x_k)\big\|^p_pdt \right)^{\frac 1 p}$$
 $$\le \beta_p \left(\int \big\|\sum r_k(t)  x_k\big\|^2_H dt\right)^{\frac 12}=\beta_p \Big( \sum\big\| x_k\big\|_H^2 \Big)^{\frac 12}.$$
By Step 2 from  \S \ref{sec1} and by  
Lemma \ref{div} it follows that for some constant $\beta'_p$
we have $$\inf_{f\in {\cl D}'  } \sum D_{f^\alpha}(u(x_k))^2 \le\beta^{'2}_p \sum\big\| x_k\big\|_H^2 .$$
Let $ a_k>0$ be arbitrary coefficients.
We may obviously replace $x_k$ by $a_k x_k$.
 Let us now fix a  sequence $(T_k)$ in $L_p(\tau)$. Assume
 given $\beta>0$ such that  for any sequence $ a_k>0$ we have
 \begin{equation}\label{sup4} \inf_{f\in {\cl D}'  } \sum  a^2_k D_{f^\alpha}(T_k)^2 \le\beta  
 \sum a^2_k .\end{equation}
 Note that this obviously remains true if we replace the infimum over $ {\cl D}'  $
 by an infimum over  the set $ {\cl D}_T  $ formed of those
 $f\in   {\cl D}' $ such that  $D_{f^\alpha}(T_k)<\infty$ for all $k=1,\cdots,n$.
 By Lemma \ref{hb} there is a net of
 finitely supported  probability measures $(\lambda_i)$ on $\cl D_T$
 and an ultrafilter ${\cl U}$
 such that for any  $k=1,\cdots,n$ we have
$$\lim_{\cl U} \int [D_{{f}^\alpha}(T_k)]^2 \lambda_i(df)\le \beta.$$
Therefore, since \eqref{sup4} holds for $T_k=u(x_k)$ and $\beta=   \beta^{'2}_p$, for any finite sequence $x_k\in B_H$ 
there is a net of
 finitely supported  probability measures $(\lambda_i)$ on $\cl D'$
 such that for any  $k=1,\cdots,n$
  $$\lim_{\cl U} \int [D_{{f}^\alpha}(u(x_k))]^2 \lambda_i(df)\le \beta^{'2}_p    .$$
  Then after a simple rearrangement of this net 
  (e.g. by indexing our net by the set of finite subsets of $B_H$) we   obtain
  a net $(\lambda_i)$ of
 finitely supported  probability measures on ${\cl D}'$ such that
 $$\forall x\in H\quad \lim_{\cl U} \int [D_{{f}^\alpha}(u(x))]^2 \lambda_i(df)\le \beta^{'2}_p   \big\| x\big\|_H^2 .$$

\end{proof}
\begin{rem}\label{gjric}    When $f\ge 0$ is bounded,  
 the   multiplications $L(f)$ and $R(f)$ are self-adjoint nonnegative operators
 on $L_2(\tau)$ and hence the same is true for $J(f)$.
 Moreover for any $0\le g\le f$ we have $J(g)\le J(f)$
 and hence $J(f)^{-1}\le J(g)^{-1}$.
 Now if $p\ge 1$ since $2\alpha \le 1$ it follows
 that $x\mapsto x^{-2\alpha}$ is operator convex on $\R_+$ (see e.g. \cite{Bh}),
 so we can deduce from \eqref{ric11}
 that
 there is a net of  densities $f_i\in {\cl D}'$
 (namely $f_i=\int f d\lambda_i(f)$)   such that  for some constant C
for any $x\in H$
$$\lim_{\cl U}   [D_{{f_i}^\alpha}(u(x))]^2 \le  C  \big\| x\big\|_H^2 .$$
Then at least in the finite dimensional case, 
the net converges and we can  find a density $g\in {\cl D}'$ such that
we have, as in Maurey's original theorem,
$$    [D_{{g}^\alpha}(u(x))]^2 \le  C  \big\| x\big\|_H^2 .$$
In the commutative case, the map $x\mapsto x^{-2\alpha}$
being convex on $\R_+$ for any $\alpha$,
this argument works also for $p<1$. However,
since $x\mapsto x^{-2\alpha}$
is not operator convex on $\R_+$ when  $\alpha>\frac 12$,
we do not see how to  complete that same argument for $p<1$.
\end{rem}

\section{Application to the Mazur maps}\label{sec5}

 For a semifinite von Neumann algebra $(M,\tau)$, the Mazur map
 $M_{p,q}:L_p(\tau)\to L_q(\tau)$ is given by $M_{p,q}(f)=f|f|^{\frac
   {p-q}q}$. 
It is known to be a uniform homeomorphism on spheres (see \cite{Ray}) for 
$0< p,q<\infty$.
We would like to know for which
exponents $0<\gamma \le 1$ this (non-linear) map is $\gamma$-H\"older, by which we mean that
there is a constant $C$ such that
for any $g,h$ in the unit sphere of $L_p(\tau)$ we have
$$\big\| M_{p,q}(g)-M_{p,q}(h)\big\|_q\le C \big\| g-h\big\|_p^\gamma.$$
Precise estimates are given in \cite{R} in the 
case $1\leq p,q<\infty$, $M_{p,q}$ is H\"older with exponent 
$\min\{1,\frac p q\}$ as for commutative integration. Actually, it is shown there
that for $0<p,q<\infty$, $M_{p,q}$ is H\"older on all
semifinite von Neumann algebras  with exponent $\gamma$ and constant $C$ (both independent of $M$) iff the following inequalities occur for any finite von Neumann algebra $M$
\begin{equation}\label{acc} \forall x\in M,\, x=x^*,\, \|x\|_\infty=1,\; \forall f\in L_p(\tau)^+, \,\|f\|_p=1,
\quad \big\| xf^{\frac p q} \pm  f^{\frac p q}x \big\|_q\leq C'  \big\| xf \pm  fx \big\|_p^\gamma.\end{equation}

Thus we can use Theorem \ref{nho} to get

\begin{thm}
For any $0<p,q<\infty$ and any semifinite von Neumann algebra $(M,\tau)$, the Mazur map $M_{p,q}$ is $\gamma$-H\"older for
$\gamma<\frac 1{2q}\big(\frac {p}{3^k}\big)^2$ where $k\ge 0$ is the smallest integer
such that $\frac pq<3^k$.
\end{thm}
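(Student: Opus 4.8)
The plan is to reduce the H\"older estimate for the Mazur map to the operator inequality \eqref{acc} and then to deduce \eqref{acc} from Theorem \ref{nho}. By the result of \cite{R} quoted just above, it suffices to produce, for each pair $0<p,q<\infty$, an exponent $\gamma$ and a constant $C'$ (both independent of $M$) so that for every finite von Neumann algebra $M$, every self-adjoint $x\in M$ with $\|x\|_\infty=1$, and every $f\in L_p(\tau)^+$ with $\|f\|_p=1$ one has $\|xf^{p/q}\pm f^{p/q}x\|_q\le C'\|xf\pm fx\|_p^\gamma$. First I would normalize: set $F=f^p$, so $F\in L_1(\tau)^+$ with $\|F\|_1=1$, and rewrite $f=F^{1/p}$, $f^{p/q}=F^{1/q}$. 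Then the left side is $\|xF^{1/q}\pm F^{1/q}x\|_q$ and the right side involves $\|xF^{1/p}\pm F^{1/p}x\|_p$. To match the shape of \eqref{riceq2.0b} I would look for $\alpha=\tfrac1r$, $s$ and $\theta\in(0,1)$ with $\alpha\theta=\tfrac1p-\tfrac1q$ and $\alpha(1-\theta)=\tfrac1q$... wait, that would force $\alpha=\tfrac1p$, i.e. $s=\infty$, which is exactly the case allowed in Theorem \ref{nho} with $\|x\|_s=\|x\|_\infty=1$. Concretely, take $s=\infty$, so $\alpha=\tfrac1p$; choose $\theta$ by $\tfrac1q=\tfrac{1-\theta}{p}$, i.e. $\theta=1-\tfrac pq$, which lies in $(0,1)$ precisely when $p<q$. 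Then $\alpha(1-\theta)=\tfrac1q$ and $\alpha\cdot 1=\tfrac1p$, so \eqref{riceq2.0b} gives $\|xF^{1/q}\pm F^{1/q}x\|_q\le C\|xF^{1/p}\pm F^{1/p}x\|_p^{\frac R2(1-\theta)}$ for any $0<R<p$, and since $\|x\|_\infty=1$ the last factor is $1$. This is \eqref{acc} with $\gamma=\tfrac R2(1-\theta)=\tfrac R2\cdot\tfrac pq$, any $R<p$, hence any $\gamma<\tfrac{p^2}{2q}$. So when $p<q$ we get H\"older with every exponent $\gamma<\tfrac{p^2}{2q}$, which is the stated bound with $k=1$ and $3^k$ replaced by a better constant; but we want a statement covering all $p,q$, so we must iterate/compose.

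The second step handles the case $p\ge q$, and more generally unifies everything by composing Mazur maps. The key algebraic fact is the semigroup property $M_{q,r}\circ M_{p,q}=M_{p,r}$, and the observation that composing a $\gamma_1$-H\"older map on the sphere with a $\gamma_2$-H\"older map yields a $\gamma_1\gamma_2$-H\"older map. So to bound $\gamma$ for $M_{p,q}$ I would factor $M_{p,q}$ as a composition of Mazur maps $M_{p_0,p_1}\circ\cdots$ where consecutive exponents $p_{i-1}<p_i$ always \emph{increase} (so that each individual factor is covered by the $p<q$ case just treated), chaining from $p$ up to $q$ if $p<q$, or from $p$ up past $q$ and back — but since each step must increase, for $p\ge q$ we cannot literally do this. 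Instead I would use the other direction: $M_{p,q}$ for $p\ge q$ is the inverse on the sphere of $M_{q,p}$, but H\"older bounds for inverses are not free. The cleaner route, which I expect the authors take, is: \eqref{acc} as stated in \cite{R} with the roles symmetric enough that one reduces to $p<q$; or one breaks the range $\tfrac pq<3^k$ into $k$ sub-intervals on each of which the ratio of consecutive exponents is less than $3$, pick intermediate exponents $p=r_0<r_1<\dots<r_k$ with $r_i/r_{i-1}<3$ and $r_k$ chosen so that $M_{r_k,q}$ (now with $r_k\ge q$, the easy decreasing case handled by a direct one-sided estimate since decreasing Mazur maps are $1$-Lipschitz-ish on the sphere when... no). Let me restate: since $M_{p,q}$ is $\min\{1,p/q\}$-ish behaved, the genuinely hard regime is $p<q$ with large ratio, and there we split $[p,q]$ into $k$ geometric pieces each of ratio $<3$, apply the single-step bound $\gamma_i<\tfrac{r_{i-1}^2}{2r_i}\ge \tfrac{r_{i-1}}{2}\cdot\tfrac{r_{i-1}}{r_i}>\tfrac{r_{i-1}}{6}$ on each, and multiply. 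Tracking that the smallest $r_{i-1}$ is $p/3^{k-1}$ roughly and that there are $k$ factors produces the exponent $\gamma<\tfrac1{2q}(p/3^k)^2$ after absorbing the geometric-series losses into the power of $3$; the integer $k$ is the least one with $p/q<3^k$, i.e. exactly enough pieces to bring the ratio below a single application's reach.

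The main obstacle I anticipate is the bookkeeping in the composition/iteration step: one must choose the intermediate exponents $r_0,\dots,r_k$ so that every consecutive pair satisfies $r_{i-1}<r_i$ with $r_i/r_{i-1}$ small enough for the single-step estimate $\gamma_i$ to apply, while simultaneously controlling the \emph{product} $\prod_i\gamma_i$ from below by $\tfrac1{2q}(p/3^k)^2$. The subtlety is that the single-step exponent $\gamma_i<\tfrac{r_{i-1}^2}{2r_i}$ degrades as the exponents shrink (the factor $r_{i-1}^2$), so the early pieces of the chain — where $r_{i-1}$ is smallest — dominate the loss; one wants the geometric spacing so that $r_{i-1}$ stays as large as possible, which pushes toward ratio exactly $3$ per step and forces the $3^k$ in the denominator. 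One also has to be slightly careful that Theorem \ref{nho} is stated for $0<p<q<s\le\infty$ with $\|f\|_1=1$ and self-adjoint $x$ of norm $\le 1$, matching \eqref{acc} after the substitution $F=f^p$; the $s=\infty$ endpoint and the fact that $R<p$ is strict (so $\gamma$ is attained only as a strict supremum) account for the strict inequality $\gamma<\tfrac1{2q}(p/3^k)^2$ in the statement. None of the individual estimates is deep — the content is entirely in Theorem \ref{nho} and the reduction \eqref{acc} of \cite{R} — but assembling the constants to land exactly on $\tfrac1{2q}(p/3^k)^2$ is where care is needed.
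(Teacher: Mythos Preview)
Your first step is correct and is exactly what the paper does: apply Theorem~\ref{nho} with $s=\infty$, $\alpha=1/p$, $\theta=1-p/q$, to obtain \eqref{acc} for $p<q$ with any $\gamma<\tfrac{p^2}{2q}$. (Minor slip: this is the $k=0$ case, not $k=1$; when $p<q$ one has $p/q<1=3^0$.)

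Your second step, however, has a genuine gap. For $p\ge q$ you need to \emph{decrease} the exponent, and you correctly observe that Theorem~\ref{nho} only furnishes the single-step estimate for $p<q$ (increasing exponent). Your attempted fixes---inverting $M_{q,p}$, chaining upward and then back down, or splitting $[p,q]$ into geometric pieces of ratio $<3$---do not work: H\"older bounds for inverses are indeed not free, and each piece in a geometric chain built from Theorem~\ref{nho} must go \emph{up}, so you can never get from $p$ down to $q$ that way. Your final restatement (``the genuinely hard regime is $p<q$ with large ratio'') is also off: for $p<q$ one always has $k=0$ and no splitting is needed.

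The idea you are missing is an elementary direct estimate for the \emph{decreasing} direction that costs nothing in the H\"older exponent. The paper first reduces (via composition and \cite{R}) to $0<p,q\le 1$, and then for $q<p\le 1$ observes that $M_{p,p/3}$ is $1$-H\"older: since $M_{p,p/3}(g)=g|g|^2$, the bound follows from
\[
\big\||g|^2-|h|^2\big\|_{p/2}\le 2^{2/p}\,\big\|g-h\big\|_p,
\]
which is just H\"older plus the $p/2$-triangle inequality. Iterating, $M_{p,p/3^k}$ is $1$-H\"older for every $k$. Now choose the smallest $k$ with $p/3^k<q$ and factor
\[
M_{p,q}=M_{p/3^k,\,q}\circ M_{p,\,p/3^k}.
\]
The first factor is in the increasing regime (your first step) with exponent $<\tfrac{1}{2q}\big(\tfrac{p}{3^k}\big)^2$, and the second factor is $1$-H\"older, giving the stated bound. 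So the integer $3$ and the power $3^k$ come not from spacing a geometric chain of applications of Theorem~\ref{nho}, but from the fact that a single ``free'' downward step is by a factor of $3$.
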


\begin{proof}
Using composition and \cite{R}, it suffices to do it when $0<p,q\leq 1$.

We start by looking at $0<p<q\leq 1$. By Theorem \ref{nho} with $s=\infty$, we have for all $R<p$ and some 
constant $C_R$, for $x$ and $f$ as in \eqref{acc}
\begin{equation}\label{eq4.0}\big\| xf^{\frac p q} \pm  f^{\frac p q}x \big\|_q\leq  C_R 
 \big\| xf \pm  fx \big\|_p^{\frac {pR}{2q}}.\end{equation}
Thus \eqref{acc} holds for any $\gamma<\frac {p^2}{2q}$, which gives us the case $k=0$.

To treat the case $q<p\leq 1$, note that for any $g,h\in L_p$ with norm 1, 
using $$\big\|g^*g-f^*f\big\|_{\frac p2}= \big\|g^*(g-f)+(g^*-f^*)f\big\|_{\frac p2}\le 2^{{\frac 2p}-1}\Big( \big\|g^*(g-f)\big\|_{\frac p2}+\big\|(g^*-f^*)f\big\|_{\frac p2}\Big)$$
and H\"older's inequality, we find
$$\big\| |g|^2- |h|^2 \big\|_{\frac p2} \leq 2^{\frac 2 p} \big\| g-h\big\|_p.$$ One easily deduces that $M_{p,\frac p 3}$ is $  1
$-H\"older. 
By iteration, the same is true for
$M_{p,\frac p {3^2}}=M_{\frac p 3,\frac p {3^2}}M_{p,\frac p 3} $
and more generally for $M_{p,\frac p {3^k}}$ for all $k\ge 0$.
Let $k$ so that $  \frac p{3^k}< q$. As
$M_{p,q}=M_{\frac p {3^k}, q}M_{p,\frac p {3^k}}$, one deduces that
    $M_{p,q}$  
    admits the same H\"older exponent as  $M_{\frac p {3^k}, q}$,
    which by \eqref{eq4.0} is  at least as good as $\gamma_k<\frac 1{2q}\big(\frac {p}{3^k}\big)^2$.
    Thus one concludes that $M_{p,q}$ is $\gamma_k$-H\"older.
    \end{proof}

\begin{rem} It seems likely that the exponents are   not optimal. For instance, one can argue as in Corollary 2.4
in \cite{R}, using Kosaki's inequality $\big\|g^p-h^p\big\|_1\leq \big\|g-h\big\|_p^p$ for $0<p<1$ and $g,h\in L_p^+$ to get that 
$$\big\| xf^p-f^px\big\|_1\leq 2\big\| xf-fx\big\|_p^p.$$
\end{rem}

\begin{rem} Because of the lack of convexity and in particular of
conditional expectations, one apparently cannot apply the Haagerup reduction technique to get the result for general von Neumann algebras.
\end{rem}

  \end{document}